\newtheorem{theorem}{Theorem}
\numberwithin{theorem}{section}
\newtheorem{lemma}[theorem]{Lemma}
\theoremstyle{definition}
\newtheorem{remark}[theorem]{Remark}
\numberwithin{equation}{section}
\newtheorem{algorithm}{Algorithm}
\definecolor{refkey}{rgb}{0.9451,0.2706,0.4941}\definecolor{labelkey}{rgb}{0.9451,0.2706,0.4941}
\definecolor{darkred}{RGB}{139,0,0}
\definecolor{darkgreen}{RGB}{0,100,0}
\definecolor{darkmagenta}{RGB}{139,0,139}
\definecolor{gray}{RGB}{180,180,180}
\newcommand{\bbN}{{\mathbb{N}}}
\newcommand{\bbR}{{\mathbb{R}}}
\newcommand{\bsnu}{{\boldsymbol{\nu}}}
\newcommand{\bssigma}{{\boldsymbol{\sigma}}}
\newcommand{\bsb}{{\boldsymbol{b}}}
\newcommand{\bsm}{{\boldsymbol{m}}}
\newcommand{\bsv}{{\boldsymbol{v}}}
\newcommand{\calC}{\mathcal{C}}
\newcommand{\calL}{\mathcal{L}}
\newcommand{\mask}[1]{{}}
\title{Approximation of risk-averse optimal feedback control}
\author{Philipp A.~Guth\footnotemark[2]~~and Karl Kunisch\footnotemark[2] $^{,}$\footnotemark[3]}
\newcommand{\oldalglinenumber}{}
\NewDocumentCommand{\NoLNState}{}{%
  \RenewCommandCopy{\oldalglinenumber}{\alglinenumber}
  \RenewDocumentCommand{\alglinenumber}{ m }{}
  \State
  \addtocounter{ALG@line}{-1}
  \RenewCommandCopy{\alglinenumber}{\oldalglinenumber}
}
\begin{document}\maketitle

\begin{abstract}
The challenge of constructing feedback control laws for risk-averse optimal control of partial differential equations (PDEs) with random coefficients is addressed. The control objective composes a tracking-type cost with the nonlinear entropic risk measure. A sequential quadratic programming scheme is derived that iteratively solves linear quadratic subproblems obtained through second-order Taylor expansions of the objective functional, with each subproblem re-centered at the previous iterate. It is shown that this method converges locally quadratically to the unique risk-averse optimal control. This work provides the first rigorous feedback synthesis for risk-averse objectives subject to PDEs with random coefficients.
\end{abstract}

\paragraph{Keywords}
risk-averse optimal control, feedback control under uncertainty, partial differential equations with random coefficients

\paragraph{MSC 2020}
49J20, 49N35, 93B52, 93C20

\footnotetext[2]{Johann Radon Institute for Computational and Applied Mathematics, Austrian Academy of Sciences, Altenbergerstra{\ss}e 69, AT-4040 Linz, Austria.}
\footnotetext[3]{Institute of Mathematics and Scientific Computing, Karl-Franzens University Graz, Heinrichstrasse~36, AT-8010 Graz, Austria.}
\footnotetext{{\sc Emails}:
{\small\tt   philipp.guth@ricam.oeaw.ac.at,\quad karl.kunisch@uni-graz.at}}

\section{Introduction}
Optimization and optimal control of systems with uncertain parameters is a challenging task since already small parameter changes may change the characteristics of the system, such as its stability properties. If neglected, the uncertainty may lead to failure of the control objective and hence a careful treatment of the uncertainty is indispensable. 

In uncertainty quantification, partial differential equations (PDEs) with random coefficients have become a popular way of incorporating uncertainty into models for engineering or physical processes. One reason behind their success is the ability to integrate knowledge of governing physical equations while accommodating for randomness, which may reflect, for instance, missing data, measurement uncertainties, material imperfections, or external influences. 

If PDEs with random coefficients appear as constraints of an optimal control problem, the objective function becomes a random variable. In order to be able to perform optimization, risk measures are used, which map the random variable objective function to the real line.
A widely used risk measure is the expected value,~\cite{azmi24, ChenGhattas, Gahururu2023, geiersbach, GKKSS21, kouriheinken, Milz2023, Romisch2024}. The expected value is linear, Fr\'echet differentiable and it belongs to the class of coherent risk measures (\cite{coherent}). Despite its widespread adoption, decisions based solely on the expected value (called risk-neutral decisions) are inadequate in many practical scenarios, as they fail to account for the inherent variability in uncertain outcomes. In many situations it is important to minimize the likelihood of extremely large cost rather than merely minimizing the expected cost. In fact, in real-world decision-making under uncertainty individuals typically prefer more predictable or less uncertain outcomes over those with greater volatility, even if both have the same expected return. This risk averse preferences can be reflected by risk measures which disproportionately penalize outcomes that involve high cost or variability. 

Risk measures typically involve integrals over the possibly infinite-dimensional space of uncertain parameters. The significant computational expense associated with these problems has spurred research into the development of efficient numerical algorithms, such as those based on sparse grids~\cite{kouriheinken, CVaR} or quasi-Monte Carlo (QMC) methods~\cite{GKKSS21, GKKSS24, GKK24, Milz2024}.
The entropic risk measure is such a risk averse measure, 
which can be effectively computed using QMC methods, see~\cite{GKKSS24,LongoSchwabStein}.

In this paper the application of sequential second-order Taylor approximations for the efficient approximation of a risk-averse feedback control law based on the entropic risk measure is investigated. To do so, given a possibly countably infinite sequence of uncertain parameters~$\bssigma = (\sigma)_{j\ge 1} \in \mathfrak{S} \subseteq \bbR^\bbN$, let us consider the parameter-dependent optimal control problem 
\begin{align}
\begin{split}\label{eq:J}
    &\min_{u,y} \mathcal{J}(u,y),\quad \mathcal{J}(u,y) := \frac12\bigg(  \int_0^T\left( \mathcal{R}\left(\|C(y(\cdot;t) - g(t))\|_H^2\right) + \|u(t)\|_{U}^2\right) \mathrm dt \\
    &\qquad\qquad\qquad\qquad\qquad\qquad\qquad\qquad + \mathcal{R}\left(\|P(y(\cdot;T) - g_T)\|_H^2 \right)\bigg),
\end{split}
\end{align}
subject to 
\begin{align}\label{eq:sys}
    \dot{y}(\bssigma;t) = A(\bssigma) y(\bssigma;t) + B u(t) + f(t) \quad y(\bssigma;0) = y_0, 
\end{align}
for all~$\bssigma \in \mathfrak{S}$. Given~$T>0$, the goal is to find a control input~$u \in L^2(0,T;U)$ which steers the parameter-dependent state~$y(\bssigma)$ as close as possible to the targets~$g \in \mathcal{C}([0,T];H)$ and~$g_T \in H$. The initial condition of the state~$y(\bssigma;0) = y_0 \in H$ is known. The time evolution of the system is described by the parameter-dependent operators~$A(\bssigma) \in \mathcal{L}(V,V')$ acting on the current state,~$B\in \mathcal{L}(U,H)$ acting on the input control, and an external forcing~$f\in L^2(0,T;V')$. The operator~$C\in \mathcal{L}(H)$ is an observation operator and~$P \in \mathcal{L}(H)$. In principle,~$\mathcal{R}$ can denote any risk measure. However, in this work, we focus on the entropic risk measure since it is smooth and it can be computed effectively for high-dimensional uncertain parameters.

\paragraph{Uncertain parameters}
Throughout this manuscript we assume that the domain of the uncertain variables is given as
\begin{align*}
    \mathfrak{S} := \left[-1,1\right]^\bbN.
\end{align*}
Furthermore, the components~$\sigma_j$ of the parameter sequence~$\bssigma  = (\sigma_j)_{j\in \bbN}$ are independently and identically distributed random variables, each being uniformly distributed over~$\left[-1,1\right]$. That is, the sequence~$\bssigma$ is distributed according to the countable product probability measure
\begin{align*}
    \mathrm d\bssigma := \bigotimes_{j\in \bbN} \frac{\mathrm d\sigma_j}{2}.
\end{align*}
We shall be interested in continuous functions~$\bssigma \mapsto X(\bssigma)$ taking values in some Banach space. In this case~$X$ is Bochner-integrable and its expected value can be written as infinite-dimensional integral
\begin{align}\label{eq:exp_int}
    \mathbb{E}\left[X\right] = \int_{\mathfrak{S}} X(\bssigma) \, \mathrm d\bssigma.
\end{align}

\paragraph{Entropic risk measure}
We shall focus on a particular choice of the risk measure~$\mathcal{R}$: the entropic risk measure with risk-aversion parameter~$\theta>0$. For a random variable~$X \in L^\infty(\mathfrak{S};\bbR)$, it is defined as
\begin{align*}
    \mathcal{R}_\theta (X) := \frac{1}{\theta} \log{\left( \mathbb{E} \left[ \mathrm e^{\theta X }\right]\right)}.
\end{align*}
As a function of~$\theta$, the entropic risk measure is increasing and strictly increasing if~$X$ is not constant (a.s.). Moreover, it can be shown (see, e.g.,~\cite[Thm.~1.3.2]{modernrisk}) that 
\begin{align*}
    \lim_{\theta \to 0} \mathcal{R}_\theta(X) = \mathbb{E}\left[X\right] \qquad \text{and} \qquad \lim_{\theta \to \infty} \mathcal{R}_\theta(X) = {\rm ess\,sup}(X).
\end{align*}

Due to its exponential form, the entropic risk measure in~\eqref{eq:J} assigns greater weight to scenarios that involve large tracking cost, thereby encouraging control actions that avoid these risks, even if the control cost might be higher. In large deviation theory the entropic risk measure is also called logarithmic moment generating function~\cite{dembo2010large}.

\subsection{Notation}
Given real numbers~$r<s$ and separable Banach spaces~$\mathcal{X}$ and~$\mathcal{Y}$, the space of continuous functions from~$[r,s]$ into~$\mathcal{X}$ is denoted by~$\calC([r,s];\mathcal{X})$ and the Bochner space of strongly measurable square integrable functions from the interval~$(r,s)$ into~$\mathcal{X}$ is denoted by~$L^2(r,s;\mathcal{X})$ and we also denote the subspace~$W(r,s;\mathcal{X},\mathcal{Y}):= \{v \in L^2(r,s;\mathcal{X})\,\mid\,\dot{v} \in L^2(r,s;\mathcal{Y})\}$.
Since the time horizon~$T>0$ will be fixed throughout this manuscript, to shorten the exposition, sometimes we shall denote
\begin{equation*}
{\mathcal{X}}_T := L^2( 0,T;\mathcal{X})\quad\mbox{and}\quad W_T(\mathcal{X},\mathcal{Y}):= W(0,T; \mathcal{X},\mathcal{Y}).
\end{equation*}

By~$\calL(\mathcal{X},\mathcal{Y})$ we denote the space of linear continuous mappings from~$\mathcal{X}$ into~$\mathcal{Y}$, and in case~$\mathcal{X}=\mathcal{Y}$ we use the shorter~$\calL(\mathcal{X}):=\calL(\mathcal{X},\mathcal{X})$.

Throughout this manuscript, boldfaced symbols are used to denote multi-indices while the subscript notation~$m_j$ is used to refer to the~$j^{\rm th}$ component of a multi-index~$\boldsymbol{m}$. Further, let 
\begin{align*}
    \mathscr{F} := \{ \bsm \in \bbN_0^\bbN \mid |\bsm| < \infty \}
\end{align*}
denote the set of finitely supported multi-indices, where the order of a multi-index~$\bsm$ is defined as $|\bsm| := \sum_{j\ge 1} m_j$. Further, we write~$\delta_{\bsm,\boldsymbol{0}} =1$ if~$m_j = 0$ $\forall j \ge1$ and~$\delta_{\bsm,\boldsymbol{0}} = 0$ otherwise.

\section{Problem formulation}\label{sec:probl_form}

In order to show existence and uniqueness of a solution to~\eqref{eq:J} subject to~\eqref{eq:sys}, we make the following assumptions: the parameter-dependent operator~$A(\bssigma)$ in~\eqref{eq:sys} can be associated with a continuous and~$V$-$H$-coercive parameter-dependent bilinear form~$a(\bssigma;\cdot,\cdot)$, that is
\begin{align}\label{eq:A1}
    \langle A(\bssigma) v,w\rangle_{V',V} := -a(\bssigma;v,w),\qquad \forall v,w \in V,\quad \forall \bssigma \in \mathfrak{S},
\end{align}
where~$\exists (\rho,\theta) \in \bbR \times (0,\infty)$ such that
\begin{align}\label{eq:A2}
    a(\bssigma;v,v) + \rho \|v\|_H^2 \ge \theta \|v\|_V^2\quad \forall v \in V\quad {\rm and}\quad \forall \bssigma \in \mathfrak{S} .
\end{align}
In addition we assume that the operators have a uniform upper bound, that is
\begin{align}\label{eq:A3}
    \|A(\bssigma)\|_{\calL(V,V')}\le C_D\quad \forall \bssigma \in \mathfrak{S}.
\end{align}
For every~$\bssigma \in \mathfrak{S}$, let us define the parameterized parabolic evolution operator~$D(\bssigma)$ as
\begin{align}\label{eq:defpara}
    \langle D(\bssigma) w, (v_1,v_2) \rangle_{V'_T \times H,V_T \times H} := \langle \dot w, v_1 \rangle_{V_T',V_T} - \langle A(\bssigma) w, v_1 \rangle_{V_T',V_T} + \langle w(0), v_2 \rangle_H
\end{align}
for all~$w\in W_T(V,V')$ and all~$v=(v_1,v_2) \in V_T \times H$, where we recall that~$V_T = L^2(0,T;V)$.
The parameterized family of operators~$\{D(\bssigma) \in \calL(W_T(V,V'),V'_T \times H): \bssigma \in \mathfrak{S}\}$ has uniformly bounded inverses, see, e.g.,~\cite{GKK24,kunoth2013analytic}.

Throughout the manuscript it will be assumed that 
\begin{align*}
    A:\mathfrak{S} \to \calL(V,V') ,\quad \bssigma \mapsto A(\bssigma) \quad \text{is continuous},
\end{align*}
which leads to the following regularity result for solutions of~\eqref{eq:sys}.
\begin{lemma}\label{lem:extra_reg}
    Given~$u\in U_T$,~$y_0 \in H$, and~$f \in V_T'$, there is a unique solution~$y \in \calC([0,T];\calC(\mathfrak{S};H))$ of~\eqref{eq:sys}.
\end{lemma}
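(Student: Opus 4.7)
The plan is to first establish, for each fixed $\bssigma \in \mathfrak{S}$, existence and uniqueness of a solution in $W_T(V,V') \hookrightarrow \calC([0,T];H)$ by appealing to the isomorphism property of the parabolic evolution operator $D(\bssigma)$ recalled after~\eqref{eq:defpara}, and then to upgrade this pointwise-in-$\bssigma$ solvability to joint continuity in $(\bssigma,t)$ by controlling the difference of two solutions through the standing continuity assumption $\bssigma \mapsto A(\bssigma) \in \calL(V,V')$.

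First I would fix $\bssigma \in \mathfrak{S}$ and rewrite~\eqref{eq:sys} as the single operator equation $D(\bssigma)\, y(\bssigma) = (Bu + f,\, y_0)$ in $V_T' \times H$. Since $D(\bssigma)$ is an isomorphism whose inverse is bounded uniformly in $\bssigma$, this yields a unique $y(\bssigma) \in W_T(V,V')$ together with the a priori bound
\[
\|y(\bssigma)\|_{W_T(V,V')} \le C\,\bigl(\|Bu + f\|_{V_T'} + \|y_0\|_H\bigr),
\]
with $C$ independent of $\bssigma$. The continuous embedding $W_T(V,V') \hookrightarrow \calC([0,T];H)$ then delivers temporal continuity and a uniform-in-$\bssigma$ bound in the $\calC([0,T];H)$-norm.

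Next I would establish continuity of $\bssigma \mapsto y(\bssigma)$ as a map from $\mathfrak{S}$ (equipped with the product topology, hence compact and metrizable) into $\calC([0,T];H)$. For $\bssigma,\bstau \in \mathfrak{S}$ the difference $z := y(\bssigma) - y(\bstau)$ satisfies $D(\bssigma)\, z = \bigl((A(\bssigma) - A(\bstau))\, y(\bstau),\, 0\bigr)$ with vanishing initial datum, so the same uniform bound on $D(\bssigma)^{-1}$ combined with the previous a priori estimate applied to $y(\bstau)$ gives
\[
\|z\|_{\calC([0,T];H)} \le C'\,\|A(\bssigma) - A(\bstau)\|_{\calL(V,V')}\,\bigl(\|Bu + f\|_{V_T'} + \|y_0\|_H\bigr),
\]
and the right-hand side tends to zero as $\bstau \to \bssigma$ by the assumed continuity of $A$.

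Finally, joint continuity on $[0,T]\times\mathfrak{S}$ follows from the triangle inequality: for any sequence $(\bssigma_n,t_n) \to (\bssigma,t)$,
\[
\|y(\bssigma_n;t_n) - y(\bssigma;t)\|_H \le \|y(\bssigma_n) - y(\bssigma)\|_{\calC([0,T];H)} + \|y(\bssigma;t_n) - y(\bssigma;t)\|_H,
\]
and both terms vanish in the limit by the previous step and temporal continuity respectively, identifying $y$ as an element of $\calC([0,T]\times\mathfrak{S};H) = \calC([0,T];\calC(\mathfrak{S};H))$. The only delicate point I anticipate is that $\bssigma$-continuity must hold \emph{uniformly} in $t$, but this is automatic from applying the $W_T(V,V') \hookrightarrow \calC([0,T];H)$ embedding directly to the difference equation, so no additional work is required beyond the three steps above.
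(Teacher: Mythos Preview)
Your proof is correct and follows essentially the same route as the paper's. Both arguments use the uniform bounded invertibility of $D(\bssigma)$ together with the continuity of $\bssigma \mapsto A(\bssigma)$ to obtain $\bssigma \mapsto y(\bssigma) \in W_T(V,V')$ continuous, then apply the embedding $W_T(V,V') \hookrightarrow \calC([0,T];H)$ and the compactness of $\mathfrak{S}$ to pass to joint continuity in $(t,\bssigma)$.

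The only noteworthy difference is one of presentation: where the paper invokes a general result on the continuity of parametric inverses (\cite[Thm.~1.1.1]{Gittelson2011}) to obtain continuity of $\bssigma \mapsto y(\bssigma)$, you prove this directly by writing down the difference equation $D(\bssigma)\,z = \bigl((A(\bssigma)-A(\bstau))\,y(\bstau),\,0\bigr)$ and applying the uniform bound on $D(\bssigma)^{-1}$. Your argument is thus self-contained and slightly more elementary, while the paper's is shorter by deferring to the cited reference; neither gains anything substantive over the other.
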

\begin{proof}
    Under the assumption that~$\bssigma \mapsto A(\bssigma) \in \calL(V,V')$ is continuous, the parametric mapping to the evolution operator~$\bssigma \mapsto D(\bssigma) \in \calL(W_T(V,V'),L^2_T(V')\times H)$, as defined in~\eqref{eq:defpara}, is continuous. Then, with~\eqref{eq:A2} and~\eqref{eq:A3} it follows that~$D(\bssigma)$ is boundedly invertible for all~$\bssigma \in \mathfrak{S}$, and hence from \cite[Thm.~1.1.1]{Gittelson2011} we conclude that~$\bssigma \mapsto y \in W_T(V,V')$ is continuous. From the continuous embedding of~$W_T(V,V') \hookrightarrow \calC([0,T];H)$ we have~$y\in \calC(\mathfrak{S};\calC([0,T];H))$, and in particular~$y \in C(\mathfrak{S}\times [0,T];H)$, and consequently~$y(\cdot;t,\cdot) \in \calC(\mathfrak{S};H)$ for every~$t \in [0,T]$. 
\end{proof}

By substituting~$y(\bssigma;t) = D(\bssigma)^{-1}(Bu(t)+f(t),y_0)$ in~\eqref{eq:J} one obtains the reduced formulation of the optimal control problem~$\min_{u} J(u)$, which only depends on the control input~$u$. In the remainder of this section we will prove the existence and uniqueness of a minimizer of the reduced (and equivalently the original) problem and characterize it by a necessary and sufficient optimality condition.

It will sometimes be convenient to write~\eqref{eq:sys} in the equivalent form 
\begin{align*}
    D(\bssigma)y(\bssigma) = (D_1(\bssigma)y(\bssigma), D_2(\bssigma)y(\bssigma)) = (Bu + f, y_0) \quad \text{in } V_T' \times H,
\end{align*} where we define
\begin{align*}
D_1(\bssigma) &:= \Lambda_1 D(\bssigma),\quad \Lambda_1: V_T' \times H \to V_T',\quad \Lambda_1(v_1,v_2) := v_1, \quad\text{for all~$v = (v_1,v_2) \in V_T' \times H$},\\
D_2(\bssigma) &:= \Lambda_2 D(\bssigma),\quad \Lambda_2: V_T' \times H \to H,\quad\,\, \Lambda_2(v_1,v_2) := v_2, \quad\text{for all~$v = (v_1,v_2) \in V_T' \times H$}.
\end{align*}
Meaningful (right-)inverse operators of~$D_1(\bssigma):W_T(V,V') \to V_T'$ and~$D_2(\bssigma):W_T(V,V') \to H$ are given by
\begin{align*}
    D_1^\dagger(\bssigma) &:= D(\bssigma)^{-1} \Xi_1, \quad \Xi_1: V_T' \to V_T'\times H,\quad \Xi_1 v_1 := (v_1,0),\quad\text{for all~$v_1\in V_T'$},\\
    D_2^\dagger(\bssigma) &:= D(\bssigma)^{-1} \Xi_2, \quad \Xi_2: H \to V_T'\times H,\quad \,\,\Xi_2 v_2 := (0,v_2),\quad\text{for all~$v_2\in H$}.
\end{align*}
It follows that, for all~$\bssigma \in \mathfrak{S}$, the unique solution of~\eqref{eq:sys} can be written as
\begin{align*}
    y(\bssigma) = D(\bssigma)^{-1} (Bu + f,y_0) = D_1^\dagger(\bssigma) (Bu + f) + D_2^\dagger(\bssigma) y_0 \quad \text{in } W_T(V,V').
\end{align*}
By substituting this into the objective function~\eqref{eq:J}, we arrive at the reduced problem
\begin{align*}
    \min_{u} J(u), \quad J(u) := \mathcal{J}(u,D(\bssigma)^{-1} (Bu + f,y_0)).
\end{align*}
We show next that this reduced problem (and hence the original one) has a unique solution.

\paragraph{Existence of solutions}
We have that~$0 \le J(u)$, thus there is a sequence~$(u_k)_{k\in \bbN}$ such that~$\lim_{k \to \infty}J(u_k) = \inf_{u} J(u)$. Since~$J(u)$ is coercive ($J(u) \ge \frac12 \int_0^T\|u\|_U^2\,\mathrm dt$), the infimizing sequence is bounded. The sequence takes values in~$L^2(0,T;U)$ and thus it has a weakly convergent subsequence~$u_\ell \rightharpoonup \bar{u}$. Since~$J$ is convex (see also next paragraph) and continuous, it is weakly lower semicontinuous and we have
\begin{align*}
    J(\bar{u}) \le \liminf_{\ell \to \infty} J(u_\ell) = \inf_{u} J(u).
\end{align*}
Thus,~$J(\bar{u}) = \inf_u J(u)$ and~$\bar{u}$ is the sought minimizer.

\paragraph{Uniqueness of the solution}
First, observe that the two terms in the cost functional that involve the entropic risk measure,~$\mathcal{R}_\theta(\|C(y(\cdot;t) - g(t))\|_H^2$ and~$\mathcal{R}_\theta(\|P(y(\cdot;T)- g_T)\|_H^2$, are well-defined for solutions~$y\in \mathcal{C}([0,T];\mathcal{C}(\mathfrak{S};H))$ of~\eqref{eq:sys}. Given~$\theta \ge 0$, for every fixed~$t \in [0,T]$, the functional
\begin{align*}
    u\mapsto \mathcal{R}_\theta(\|C(y(\cdot;t) - g(t))\|_H^2)
\end{align*}
is convex. This follows from the fact, that~$E_t: W_T(V,V') \to H$, which evaluates functions pointwise, is a bounded linear operator, and from the linear dependence of~$y(\bssigma)$ on~$u$, the convexity of~$z \mapsto \|z\|^2_H$, as well as the convexity and the monotonicity of~$\mathcal{R}_\theta$. By the same reasoning the convexity of
\begin{align*}
    u \mapsto \mathcal{R}_\theta(\|P(y(\cdot;T)- g_T)\|_H^2 )
\end{align*}
follows. Furthermore, we have that
$$u \mapsto \int_0^T  \mathcal{R}_\theta(\|C(y(\cdot;t) - g(t))\|_H^2)\, \mathrm dt$$ 
is convex. 
The control cost~$u \mapsto \frac12\int_0^T\|u(t)\|^2_U\,\mathrm dt$ is strongly convex. Thus,~$J(u)$ is strongly convex since it is the positive linear combination of two convex functions and a strongly convex function. Hence, the solution is unique.

\paragraph{Optimality condition}
Since the problem is strongly convex, first-order optimality conditions are necessary and sufficient. Thus, we shall be interested in the gradient of the reduced objective function
\begin{align*}
    J(u) &= \frac12\left(  \int_0^T\left( \frac{1}{\theta} \log{\left( \mathbb{E} \left[ \mathrm e^{\theta \|C(D(\cdot)^{-1}\left((Bu(t)+f(t)),y_0\right) - g(t))\|_H^2  }\right]\right)} +\|u(t)\|_{U}^2\right) \mathrm dt \right.\\ 
    &\quad+\left. \frac{1}{\theta} \log{\left( \mathbb{E} \left[ \mathrm e^{\theta \|P(E_T D(\cdot)^{-1}\left((Bu(t)+f(t)),y_0\right) - g_T)\|_H^2  }\right]\right)}\right).
\end{align*}
Given~$f \in V_T'$ and~$ y_0 \in H$, let~$y_u(\bssigma)$ denote the solution to~\eqref{eq:sys} for a fixed~$\bssigma \in \mathfrak{S}$, and a given input control~$u$. Then, we have that~$y_{u+\delta}(\bssigma) - y_u(\bssigma) = D(\bssigma)^{-1}(B(u+\delta) + f, y_0) - D(\bssigma)^{-1}(Bu + f, y_0) = D(\bssigma)^{-1}(B\delta,0) = D_1^\dagger(\bssigma)B\delta$. Using this result, the Fr\'echet differentiability of~$J$ at~$u \in U_T = L^2(0,T;U)$ follows from the differentiability of both the entropic risk measure~\cite{entropic} and the squared norm, in combination with the chain rule and the Lebesgue dominated convergence theorem.

The gradient~$\nabla J(u) \in U_T$ is identified with the Riesz representer of the Fr\'echet derivative at~$u \in U_T$, that is~$\frac{\partial}{\partial u} J(u) \delta = \langle \nabla J(u), \delta \rangle_{U_T}$ for all directions~$\delta \in U_T$, and given by
\begin{align}
\begin{split}\label{eq:gradient}
    \nabla J(u) =  u &+ \frac{\mathbb{E}\left[\mathrm e^{\theta\|C(y(\cdot)-g\|^2_H} B^\ast (D_1^\dagger(\cdot))^\ast \left(  C^\ast C ( y(\cdot) - g\right)\right]}{\mathbb{E}\left[\mathrm e^{\theta\|C(y(\cdot)-g\|_H^2}\right]} \\&+\frac{\mathbb{E}\left[\mathrm e^{\theta\|P(E_Ty(\cdot)-g_T)\|_H^2} B^\ast (D_1^\dagger(\cdot))^\ast \left(  E_T^\ast P^\ast P (E_T y(\cdot) - g_T)\right)\right]}{\mathbb{E}\left[\mathrm e^{\theta\| P(E_Ty(\cdot)-g_T)\|_H^2}\right]},
\end{split}
\end{align}
where~$E_T^\ast$ denotes the adjoint of the operator~$E_T$.

\section{Quadratic approximation of the entropic risk measure}\label{sec:section3}

Next we explore a quadratic approximation of the entropic risk measure leading to a linear quadratic optimal control problem. This approximation forms the basis of a sequential quadratic programming (SQP) method, which is employed to iteratively approximate the solution of the original optimal control problem involving the nonlinear entropic risk measure, that is~\eqref{eq:J} subject to~\eqref{eq:sys}.

To rigorously establish the Fr\'echet derivatives required for the quadratic approximation, we first revisit the regularity properties of the state~$y$. Under the conditions of Lemma~\ref{lem:extra_reg} we have~$y \in L^2(\mathfrak{S};V_T)$ and~$\dot{y} \in L^2(\mathfrak{S};V'_T)$. Moreover,~$V_T$ as well as~$L^2(\mathfrak{S})$ are separable Hilbert spaces, and hence it holds that~$L^2(\mathfrak{S};V_T) = L^2(\mathfrak{S}) \otimes V_T$, where~$\otimes$ denotes the Hilbert tensor product, where equality (or canonical identification) is up to a unique isometric isomorphism, see~\cite[Thm. II.10]{reedsimon}. Similarly, we have~$V_T = L^2(0,T) \otimes V$ since both~$L^2(0,T)$ and~$V$ are separable. Thus, we have
\begin{align}
\begin{split}\label{eq:swapspaces}
    L^2(\mathfrak{S};V_T) &= L^2(\mathfrak{S}) \otimes (L^2(0,T) \otimes V) = (L^2(\mathfrak{S}) \otimes L^2(0,T)) \otimes V\\
    &=L^2(\mathfrak{S} \times [0,T]) \otimes V = (L^2(0,T) \otimes L^2(\mathfrak{S})) \otimes V\\ 
    &=  L^2(0,T) \otimes (L^2(\mathfrak{S}) \otimes V) = L^2(0,T;L^2(\mathfrak{S};V)),
    \end{split}
\end{align}
where we used the associativity (up to unique isometric ismorphism) of the tensor product, see~\cite[Prop. 2.6.5.]{kadisonringrose}. Analogously, we have~$\dot{y} \in L^2(0,T;L^2(\mathfrak{S};V'))$, and consequently~$y \in W_T(L^2(\mathfrak{S};V),L^2(\mathfrak{S};V'))$.

To compute Fr\'echet derivatives, we define 
\begin{align*}
    Z := \{(u,y) \in U_T \times W_T(L^2(\mathfrak{S};V),L^2(\mathfrak{S};V')) \mid \dot{y} = \mathcal{A}y + Bu + f,\, y(0) = y_0\},
\end{align*}
where~$\mathcal{A} \in \calL(L^2(\mathfrak{S};V),L^2(\mathfrak{S};V'))$ is the extension of the pointwise operator family~$\{A(\bssigma) \in \calL(V,V')\mid \bssigma \in \mathfrak{S}\}$ to the Lebesgue--Bochner space as~$\mathcal{A} v := [\bssigma \mapsto A(\bssigma) v(\bssigma)]$. The dynamical system in the definition of~$Z$ is posed in~$L^2(0,T;L^2(\mathfrak{S};V'))$, which is identified with~$L^2(\mathfrak{S};V_T')$ by~\eqref{eq:swapspaces}, with~$V_T$ replaced by~$V_T'$. Thus, the dynamics in~$Z$ can be recast as equation in~$V_T'$ as follows:
\begin{align}\label{eq:aesol}
    \dot{y}(\bssigma) = A(\bssigma) y(\bssigma) + B u + f, \quad y(\bssigma;0,\cdot) = y_0
\end{align}
for almost every~$\bssigma \in \mathfrak{S}$. On the other hand, given~$f$,~$u$, and~$y_0$, the system~\eqref{eq:aesol} has a unique solution~$\tilde{y}(\bssigma) \in W_T(V,V')$ for every~$\bssigma \in \mathfrak{S}$, which by Lemma~\ref{lem:extra_reg} admits the additional regularity~$\tilde{y} \in \calC(\mathfrak{S};\calC([0,T];H))$. For the solution~$y$ to~\eqref{eq:aesol} we find that~$y = \tilde{y}$ up to~$\mathrm d\bssigma$-almost everywhere equivalence. We have thus shown the following result:
\begin{lemma}
    Given~$f \in V_T'$ and~$y_0 \in H$, the state component of tuples~$(u,y) \in Z$ admits a representative~$y \in \calC(\mathfrak{S};\calC([0,T]; H))$.
\end{lemma}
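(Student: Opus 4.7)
The plan is to produce the representative by solving the deterministic Cauchy problem pointwise in $\bssigma$ and then using uniqueness to identify it with the given $y$ up to a $\mathrm d\bssigma$-null set. The groundwork has essentially been laid in the paragraph preceding the lemma, so the proof is a matter of stringing the observations together cleanly.

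First, I would take an arbitrary tuple $(u,y)\in Z$. By definition, $y\in W_T(L^2(\mathfrak{S};V),L^2(\mathfrak{S};V'))$ and $\dot{y}=\mathcal{A}y+Bu+f$ holds in $L^2(0,T;L^2(\mathfrak{S};V'))$. Using the identification~\eqref{eq:swapspaces} (applied with $V$ replaced by $V'$) this equation can equally well be read in $L^2(\mathfrak{S};V'_T)$, and because $\mathcal{A}$ acts pointwise, $Bu+f$ is $\bssigma$-independent, and equality in a Bochner space is equality for $\mathrm d\bssigma$-almost every $\bssigma$, this is exactly the statement that~\eqref{eq:aesol} holds for $\mathrm d\bssigma$-a.e.\ $\bssigma\in\mathfrak{S}$. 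The initial condition $y(0)=y_0$ in $L^2(\mathfrak{S};H)$ is similarly equivalent to $y(\bssigma;0,\cdot)=y_0$ for a.e.\ $\bssigma$.

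Next, for every fixed $\bssigma\in\mathfrak{S}$ the problem~\eqref{eq:aesol} falls within the scope of Lemma~\ref{lem:extra_reg}, which delivers a unique solution $\tilde{y}(\bssigma)\in W_T(V,V')$ with the strengthened joint regularity $\tilde{y}\in\calC(\mathfrak{S};\calC([0,T];H))$. Finally, for $\mathrm d\bssigma$-almost every $\bssigma$, both $y(\bssigma)$ and $\tilde{y}(\bssigma)$ lie in $W_T(V,V')$ and solve the same deterministic initial value problem; the uniqueness statement in Lemma~\ref{lem:extra_reg} (equivalently, bounded invertibility of $D(\bssigma)$) therefore forces $y(\bssigma)=\tilde{y}(\bssigma)$ in $W_T(V,V')$, hence $y(\bssigma;t)=\tilde{y}(\bssigma;t)$ in $H$ for every $t\in[0,T]$, for $\mathrm d\bssigma$-a.e.\ $\bssigma$. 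Thus $\tilde{y}$ is the desired representative of the equivalence class $y$.

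The only point requiring care, and the place where I would slow down, is the reduction of the Bochner-sense evolution equation to the pointwise equation~\eqref{eq:aesol}; this is not deep but relies on the associativity of the Hilbert tensor product already invoked in~\eqref{eq:swapspaces} and on the fact that $\mathcal{A}$ was defined precisely so that $(\mathcal{A}v)(\bssigma)=A(\bssigma)v(\bssigma)$ for a.e.\ $\bssigma$. Everything else is a bookkeeping of the equivalence classes, followed by an invocation of the already-established Lemma~\ref{lem:extra_reg}.
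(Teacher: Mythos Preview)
Your proposal is correct and follows essentially the same approach as the paper: the argument in the paragraph preceding the lemma already reduces the Bochner-space dynamics to the pointwise system~\eqref{eq:aesol} via the tensor-product identification~\eqref{eq:swapspaces}, invokes Lemma~\ref{lem:extra_reg} to obtain the continuous representative~$\tilde{y}$, and then uses uniqueness to identify $y=\tilde{y}$ up to a $\mathrm d\bssigma$-null set. Your write-up makes the logical flow slightly more explicit, but there is no substantive difference.
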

Hence, in the following, we shall make use of~$y \in  \calC(\mathfrak{S};\calC([0,T];H))$ for elements~$y\in Z$ wherever continuity or pointwise evaluation is required.

Let~$\bar{y} \in \mathcal{C}([0,T];\mathcal{C}(\mathfrak{S};H))$, for example let~$\bar{y}$ be the solution to~\eqref{eq:sys} corresponding to some control~$\bar{u} \in U_T$. A quadratic approximation about~$\bar{y}$ of the cost functional~\eqref{eq:J}, is of the form
\begin{align}
\begin{split}\label{eq:quadJ}
    &\mathcal{J}_{\rm quad}(u,y) \\
    &= \frac{1}{2}\bigg( \int_0^T \Big(  \|u(t)\|^2_U + \mathcal{R}_\theta(\|C(\bar{y}(\cdot;t)-g(t))\|_H^2) \\
    &\quad+ \Big\langle \frac{\partial}{\partial y}\mathcal{R}_\theta(\|C(y(\cdot;t))-g(t))\|_H^2)\big|_{y = \bar{y}(\cdot;t)}, y(\cdot;t)- \bar{y}(\cdot;t)\Big\rangle_{L^2(\mathfrak{S};H)} \\
    &\quad+ \frac{1}{2} \left[\frac{\partial^2}{\partial y^2} \mathcal{R}_\theta(\|C(y(\cdot;t)-g(t))\|_H^2)\big|_{y = \bar{y}(\cdot;t)}\right] \left(y(\cdot;t)- \bar{y}(\cdot;t), y(\cdot;t)- \bar{y}(\cdot;t)\right) \Big) \mathrm dt\\
    &\quad+ \mathcal{R}_\theta(\|P(E_T\bar{y}(\cdot;t)-g_T)\|_H^2) \\
    &\quad+ \int_0^T \Big(\Big\langle \frac{\partial}{\partial y}\mathcal{R}_\theta(\|P(E_Ty(\cdot;t)-g_T)\|_H^2)\big|_{y = \bar{y}(\cdot;t)}, y(\cdot;t)- \bar{y}(\cdot;t)\Big\rangle_{L^2(\mathfrak{S};H)} \\
    &\quad+ \frac{1}{2}\left[ \frac{\partial^2}{\partial y^2} \mathcal{R}_\theta(\|P(E_Ty(\cdot;t)-g_T)\|_H^2)\big|_{y = \bar{y}(\cdot;t)} \right]\left(y(\cdot;t)- \bar{y}(\cdot;t), y(\cdot;t)- \bar{y}(\cdot;t)\right) \Big)\mathrm dt\bigg),
\end{split}
\end{align}
for~$(u,y) \in U_T \times \mathcal{C}([0,T];\mathcal{C}(\mathfrak{S};H))$.

For the remainder of the manuscript it will be useful to define 
\begin{align*}
    \mathscr{C}(\bssigma;t) := C^\ast C (y(\bssigma;t) -g(t))\quad \text{and} \quad \mathscr{P}(\bssigma;T) := P^\ast P (y(\bssigma;T) -g_T),
\end{align*}
as well as the weight functions
\begin{align*}
    \omega_{\theta,C}(y(\bssigma;t)) := \frac{\mathrm e^{\theta \|C(y(\bssigma;t)-g(t))\|_H^2}}{\mathbb{E} \left[\mathrm e^{\theta \|C(y(\cdot;t)-g(t))\|_H^2} \right]}, \qquad \text{and}  \qquad \omega_{\theta,P}(y(\bssigma;T)) = \frac{\mathrm e^{\theta \|P( y(\bssigma;T)-g_T)\|_H^2}}{\mathbb{E} \left[\mathrm e^{\theta \|P(y(\cdot;T)-g_T)\|_H^2} \right]},
\end{align*}
which are well-defined for~$y \in \mathcal{C}([0,T];\mathcal{C}(\mathfrak{S};H))$. Furthermore, for an integrable element~$f$ taking values in a separable Banach space, we define the weighted expectations
\begin{align*}
    \mathbb{E}_{\omega_{\theta,C}(y(\bssigma;t))}\left[f\right] := \mathbb{E}\left[f(\cdot)\,  \omega_{\theta,C}(y(\cdot;t)) \right] \quad \text{and} \quad\mathbb{E}_{\omega_{\theta,P}(y(\bssigma;T))}\left[f\right] := \mathbb{E}\left[f(\cdot)\,  \omega_{\theta,P}(y(\cdot;T)) \right]. 
\end{align*}
Using this notation, the first-order Fr\'echet derivatives at~$y \in \mathcal{C}([0,T];\mathcal{C}(\mathfrak{S};H))$ in direction~$\delta_1 \in \mathcal{C}([0,T];\mathcal{C}(\mathfrak{S};H))$ can be expressed as
\begin{align*}
&\int_0^T \left\langle \frac{\partial}{\partial y} \mathcal{R}_{\theta}(\|C(y(\cdot,t)-g(t))\|_H^2), \delta_1(\cdot;t)\right\rangle_{L^2(\mathfrak{S};H)} \mathrm dt \\
&\quad\quad\quad\quad\quad\quad\quad\quad= 2 \int_0^T \frac{\mathbb{E}[\mathrm e^{\theta \|C(y(\cdot;t)-g(t))\|_H^2 } \langle C^\ast C(y(\cdot;t)-g(t)),\delta_1(\cdot;t)\rangle_H]}{\mathbb{E}[\mathrm e^{\theta \|C(y(\cdot;t)-g(t)) \|_H^2}]} \,\mathrm dt\\
&\quad\quad\quad\quad\quad\quad\quad\quad= 2 \int_0^T \mathbb{E}_{\omega_{\theta,C}(y(\bssigma;t))}\left[\langle \mathscr{C}(\cdot;t),\delta_1(\cdot;t)\rangle_H\right] \, \mathrm dt\\
&\int_0^T \left\langle \frac{\partial}{\partial y} \mathcal{R}_{\theta}(\|P(E_Ty(\cdot;t)-g_T)\|_H^2), \delta_1(\cdot;t)\right\rangle_{L^2(\mathfrak{S};H)} \mathrm dt \\
&\quad\quad\quad\quad\quad\quad\quad\quad= 2 \int_0^T \frac{\mathbb{E}[\mathrm e^{\theta \|P(E_Ty(\cdot;t)-g_T)\|_H^2 } \langle E_T^\ast P^\ast P(E_Ty(\cdot;t)-g_T),\delta_1(\cdot;t)\rangle_H]}{\mathbb{E}[\mathrm e^{\theta \|P(E_Ty(\cdot;t)-g_T) \|_H^2}]} \,\mathrm dt
\\
&\quad\quad\quad\quad\quad\quad\quad\quad= 2 \,\mathbb{E}_{\omega_{\theta,P}(y(\bssigma;T))} \left[  \langle \mathscr{P}(\cdot;T),\delta_1(\cdot;T)\rangle_H\right].
\end{align*}%
This follows again from the chain rule and Lebesgue's dominated convergence theorem. We point out that both derivatives can be extended to bounded linear functionals on $L^2(0,T;L^2(\mathfrak{S};H))$. By Lemma~\ref{lem:extra_reg} these derivatives are well-defined on~$Z$, which will be used in Section~\ref{sec:SQP} to prove that the solution of the quadratic approximation converges to the solution of the original problem. Similarly, evaluated on~$Z$, the second-order Fr\'echet derivatives in~\eqref{eq:quadJ} are identified as the bounded bilinear forms
\begin{align*}
&\int_0^T \left[\frac{\partial^2}{\partial y^2}\mathcal{R}_{\theta}(\|C(y(\cdot;t)-g(t))\|^2_H)\right] (\delta_2(\cdot;t),\delta_1(\cdot;t)) \,\mathrm dt\\
&=\int_0^T 2\,\mathbb{E}_{\omega_{\theta,C}(y(\bssigma;t))} \Big[ \langle C^\ast C \delta_2(\cdot;t), \delta_1(\cdot;t) \rangle_H \Big]\\
&\quad+ 4 \theta\, \mathbb{E}_{\omega_{\theta,C}(y(\bssigma;t))} \Big[ \langle \mathscr{C}(\cdot;t), \delta_2(\cdot;t) \rangle_H\, \langle \mathscr{C}(\cdot;t), \delta_1(\cdot;t) \rangle_H\Big]\\
&\quad- 4 \theta\, \mathbb{E}_{\omega_{\theta,C}(y(\bssigma;t))} \Big[ \langle \mathscr{C}(\cdot;t), \delta_1(\cdot;t) \rangle_H\Big] \mathbb{E}_{\omega_{\theta,C}(y(\bssigma;t))} \Big[ \langle \mathscr{C}(\cdot;t), \delta_2(\cdot;t) \rangle_H\Big]\mathrm dt\\
&=\int_0^T 2\,\mathbb{E}_{\omega_{\theta,C}(y(\bssigma;t))} \Big[ \langle C^\ast C \delta_2(\cdot;t), \delta_1(\cdot;t) \rangle_H \Big]\\
&\quad+ 4 \theta\,{\rm Cov}_{\omega_{\theta,C}(y(\bssigma;t))}\Big( \langle \mathscr{C}(\cdot;t), \delta_2(\cdot;t) \rangle_H, \langle \mathscr{C}(\cdot;t), \delta_1(\cdot;t) \rangle_H\Big)\mathrm dt,
\end{align*}
for~$\delta_1 \in L^2(0,T;L^2(\mathfrak{S};H))$ and~$\delta_2 \in L^2(0,T;L^2(\mathfrak{S};H))$, and analogously
\begin{align*}
    &\int_0^T\left[\frac{\partial^2}{\partial y^2}\mathcal{R}_{\theta}(\|P(E_Ty(\cdot;t)-g_T)\|^2_H)\right] (\delta_2(\cdot;t),\delta_1(\cdot;t)) \,\mathrm dt\\
    &= 2\,\mathbb{E}_{\omega_{\theta,P}(y(\bssigma;T))} \left[ \langle P^\ast P \delta_2(\cdot;T), \delta_1(\cdot;T) \rangle_H \right] \\
    &\quad+ 4 \theta\,{\rm Cov}_{\omega_{\theta,P}(y(\bssigma;T))}\Big( \langle \mathscr{P}(\cdot;T), \delta_2(\cdot;T) \rangle_H, \langle \mathscr{P}(\cdot;T), \delta_1(\cdot;T) \rangle_H\Big),
\end{align*}
As a consequence, if~$\delta_1 = \delta_2$, the second derivative is nonnegative and strictly positive if additionally~$\mathscr{C}$ or~$\mathscr{P}$ is not a constant (almost surely). Moreover, the second Fr\'echet derivatives are locally Lipschitz continuous with respect to~$y \in \mathcal{C}([0,T];\mathcal{C}(\mathfrak{S};H))$, which will be used in Section~\ref{sec:SQP}. 

\begin{remark}Several remarks are in order:
    \begin{itemize}
    \item We point out that the second-order Fr\'echet derivative differs from the risk neutral case not only by the additional covariance terms, but also the expected values are weighted by the normalized exponential functions~$\omega_{\theta,C}(y(\bssigma;t))$ and~$\omega_{\theta,P}(y(\bssigma;t))$. These weights reflect the risk averse control decision of the entropic risk measure by assigning exponentially more importance to realizations of the parameter sequence~$\bssigma \in \mathfrak{S}$ which lead to large tracking cost. The exponential reweighting of a probability measure to make rare events more likely is known in large deviation theory (\cite[Ch.~3.1]{bucklew2014introduction}) and rare event simulation~\cite{rubino2009rare} as exponential tilting or exponential twisting.
    \item The weighted expected values are related to expected values with respect to the posterior measure in Bayesian inverse problems~\cite{Stuart_2010}. Specifically, in Bayesian inverse problems with forward model~$\mathcal{G}$ and centered additive Gaussian noise on the data~$Y_{\rm data} = G(\xi) + \eta$,~$\eta \sim \mathcal{N}(0,\Gamma)$, the prior belief on the unknown parameter~$\xi$ is updated by a likelihood of the form $\exp(-\|G(\xi)-Y_{\rm data}\|^2_{\Gamma})/\int \exp(-\|G(\xi)-Y_{\rm data}\|^2_{\Gamma}) \,\mathrm d\mathcal{N}(0,\Gamma)(\xi)$ resulting in a posterior probability distribution of the parameter given the data. This structural similarity between Bayesian inverse problems and optimal control problems with entropic risk measure has been used, e.g., in~\cite{LongoSchwabStein}.
    \end{itemize}
\end{remark}

\subsection{Quadratic approximation leads to risk-adjusted linear quadratic problem}\label{sec:risk-adj}
The second-order Fr\'echet derivatives at~$\bar{y} \in \mathcal{C}([0,T];\mathcal{C}(\mathfrak{S};H))$ are associated with the operators~$\mathcal{Q}_{C,\omega}(\bar{y};t) \in \calL(L^2(\mathfrak{S};H))$ and~$\mathcal{Q}_{P,\omega}(\bar{y};T) \in \calL(L^2(\mathfrak{S};H))$ as
\begin{align*}
    \langle \mathcal{Q}_{C,\omega}(\bar{y}(\cdot;t);t) \delta_1,\delta_2\rangle_{L^2(\mathfrak{S};H)} &=  \mathbb{E}_{\omega_{\theta,C}(\bar{y}(\bssigma;t))} \left[ \langle C^\ast C \delta_2(\cdot;t), \delta_1(\cdot;t) \rangle_H \right]\\
    &\quad + 2\theta\,{\rm Cov}_{\omega_{\theta,C}(y(\bssigma;t))}\Big( \langle \overline{\mathscr{C}}(\cdot;t), \delta_2(\cdot;t) \rangle_H, \langle \overline{\mathscr{C}}(\cdot;t), \delta_1(\cdot;t) \rangle_H\Big)\\
    \intertext{for almost every~$t \in [0,T]$ and}
    \langle \mathcal{Q}_{P,\omega}(\bar{y}(\cdot;T);T) \delta_1, \delta_2 \rangle_{L^2(\mathfrak{S};H)} &= \mathbb{E}_{\omega_{\theta,P}(\bar{y}(\bssigma;T))} \left[ \langle P^\ast P \delta_2(\cdot;T), \delta_1(\cdot;T) \rangle_H \right] \\
    &\quad+2\theta\,{\rm Cov}_{\omega_{\theta,P}(y(\bssigma;T))}\Big( \langle \overline{\mathscr{P}}(\cdot;T), \delta_2(\cdot;T) \rangle_H, \langle \overline{\mathscr{P}}(\cdot;T), \delta_1(\cdot;T) \rangle_H\Big),
\end{align*}
where we use~$\overline{\mathscr{C}}(\bssigma;t) := C^\ast C (\bar{y}(\bssigma;t) -g(t))$ and $\overline{\mathscr{P}}(\bssigma;T) := P^\ast P (\bar{y}(\bssigma;T) -g_T)$. Furthermore, we include the factor~$\frac{1}{2}$ for the second-order derivatives in the Taylor expansion~\eqref{eq:quadJ}. These two operators sum up to the second-order Fr\'echet derivative of the (pointwise in time) tracking error composed with the entropic risk measure, that is
\begin{align}
\begin{split}\label{eq:op_L}
    \langle L(\bar{y}(\cdot;t);t,T) \delta_1, \delta_2 \rangle_{L^2(\mathfrak{S};H)} :=&\, \langle \left(\mathcal{Q}_{C,\omega}(\bar{y}(\cdot;t);t) + \mathcal{Q}_{P,\omega}(\bar{y}(\cdot;T);T)\right) \delta_1,\delta_2 \rangle_{L^2(\mathfrak{S};H)}
\end{split}
\end{align}%
for almost every~$t \in [0,T]$ as well as for all directions $\delta_1 \in L^2(0,T;L^2(\mathfrak{S};H))$ and $\delta_2 \in L^2(0,T;L^2(\mathfrak{S};H))$. Given~$\bar{y} \in \mathcal{C}([0,T];\mathcal{C}(\mathfrak{S};H))$, 
for almost every~$t \in [0,T]$,
the operator $L(\bar{y}(\cdot;t);t,T) \in \calL(L^2(\mathfrak{S};H))$ is nonnegative and self-adjoint. 

Let us assume that~$P$ and~$C$ are not both zero (otherwise~\eqref{eq:J}--\eqref{eq:sys} admits only the trivial solution). Moreover, we assume that there exist positive constants~$c_C$ and~$c_P$ such that~$\|Cx\|_H \ge c_C \|x\|_H$ if~$C \neq 0$ and~$\|Px\|_H \ge c_P \|x\|_H$ for all~$x \in H$ if~$P\neq 0$. Then both operators~$\mathcal{Q}_{C,\omega}(\bar{y}(\cdot;t);t)$ for almost every~$t \in [0,T]$ and~$\mathcal{Q}_{T,\omega}(\bar{y}(\cdot;T);T)$ are coercive (or zero), and thus also~$L(\bar{y}(\bssigma;t);t,T)$ is coercive. Hence, for almost every~$t\in [0,T]$ and any~$\bar{y} \in \mathcal{C}([0,T];\mathcal{C}(\mathfrak{S};H))$, there exist unique positive self-adjoint operators~$\mathcal{Q}_{C,\omega}(\bar{y}(\cdot;t);t)^{1/2}$ and~$\mathcal{Q}_{T,\omega}(\bar{y}(\cdot;T);T)^{1/2}$.

Next, for~$\bar{y} \in \mathcal{C}([0,T];\mathcal{C}(\mathfrak{S};H))$, we set 
\[
\mathscr{R}_C(\bar{y}(\bssigma;t);t):= \overline{\mathscr{C}}(\cdot;t) \omega_{\theta,C}(\bar{y}(\bssigma;t)) ,\quad\text{and}\quad \mathscr{R}_P(\bar{y}(\bssigma;T);T):= \overline{\mathscr{P}}(\cdot;T) \omega_{\theta,P}(\bar{y}(\bssigma;T)),
\]
so that the quadratic approximation~\eqref{eq:quadJ} of the cost functional~\eqref{eq:J} can be written as
\begin{align*}
    \mathcal{J}_{\rm quad}(u,y) &= \frac{1}{2} \bigg(\mathcal{R}_\theta(\|P(\bar{y}(\cdot;T)-g_T)\|_H^2) + 2\, \mathbb{E}\left[ \langle \mathscr{R}_P(\bar{y}(\bssigma;T);T), y(\cdot;T)- \bar{y}(\cdot;T)\rangle_H\right]\\
    &\quad+ \langle \mathcal{Q}_{P,\omega}(\bar{y}(\cdot;T);T) (y(\cdot;T)-\bar{y}(\cdot;T)), (y(\cdot;T) - \bar{y}(\cdot;T))\rangle_{L^2(\mathfrak{S};H)}\\
    &\quad+ \int_0^T \Big( \mathcal{R}_\theta(\|C(\bar{y}(\cdot;t)-g(t))\|_H^2)+ 2\,\mathbb{E}\left[\langle \mathscr{R}_C(\bar{y}(\bssigma;t);t), y(\cdot;t)- \bar{y}(\cdot;t)\rangle_H\right] \\
    &\quad+ \langle \mathcal{Q}_{C,\omega}(\bar{y}(\cdot;t);t) (y(\cdot;t)-\bar{y}(\cdot;t)), (y(\cdot;t) - \bar{y}(\cdot;t))\rangle_{L^2(\mathfrak{S};H)} + \|u(t)\|_U^2\Big) \mathrm dt \bigg).
\end{align*}%
To reformulate the objective function as a sum of quadratic terms only, we substitute~$z_C(\bssigma;t) = \mathcal{Q}_{C,\omega}(\bar{y}(\bssigma;t);t)^{1/2}(y(\bssigma;t)-\bar{y}(\bssigma;t))$ and $z_P(\bssigma;T) = \mathcal{Q}_{P,\omega}(\bar{y}(\bssigma;T);T)^{1/2}(y(\bssigma;T)-\bar{y}(\bssigma;T))$, leading to
\begin{align*}
    &\mathcal{J}_{\rm quad}(u,z) \\
    &\quad= \frac{1}{2} \bigg( \mathcal{R}_\theta(\|P(\bar{y}(\cdot;T)-g_T)\|_H^2) +2\,\mathbb{E}\left[\langle (\mathcal{Q}_{P,\omega}(\bar{y}(\cdot;T);T)^{-1/2})^\ast \mathscr{R}_P(\bar{y}(\cdot;T);T), z_P(\cdot;T)\rangle_{H}\right]\\
    &\quad\quad+ \mathbb{E}\left[\|z_P(\cdot;T)\|_{H}^2\right] + \int_0^T \Big( \|u(t)\|_U^2 + \mathcal{R}_\theta(\|C(\bar{y}(\cdot;t)-g(t))\|_H^2) \\
    &\quad\quad+ 2\,\mathbb{E}\left[\langle (\mathcal{Q}_{C,\omega}(\bar{y}(\cdot;t);t)^{-1/2})^\ast \mathscr{R}_{C}(\bar{y}(\cdot;t);t), z_C(\cdot;t)\rangle_{H}\right] + \mathbb{E}\left[\|z_C(\cdot;t)\|^2_{H}\right] \Big)\mathrm dt \bigg) \\
    &\quad= \frac{1}{2} \bigg( \mathcal{R}_\theta(\|P(\bar{y}(\cdot;T)-g_T)\|_H^2) + \mathbb{E}\left[\|z_P(\cdot;T) + (\mathcal{Q}_{P,\omega}(\bar{y}(\cdot;T);T)^{-1/2})^\ast \mathscr{R}_P(\bar{y}(\cdot;T);T)\|^2_{H}\right]\\
    &\quad\quad - \mathbb{E}\left[\|(\mathcal{Q}_{P,\omega}(\bar{y}(\cdot;T);T)^{-1/2})^\ast \mathscr{R}_{P}(\bar{y}(\cdot;T);T)\|^2_{H} \right] +\int_0^T \Big(\|u(t)\|_U^2 + \mathcal{R}_\theta(\|C(\bar{y}(\cdot;t)-g(t))\|_H^2) \\
    &\quad\quad+ \mathbb{E}\left[\|z_C(\cdot;t) + (\mathcal{Q}_{C,\omega}(\bar{y}(\cdot;t);t)^{-1/2})^\ast \mathscr{R}_{C}(\bar{y}(\cdot;t);t)\|^2_{H}\right] \\
    &\quad\quad- \mathbb{E}\left[\|(\mathcal{Q}_{C,\omega}(\bar{y}(\cdot;t);t)^{-1/2})^\ast \mathscr{R}_{C}(\bar{y}(\cdot;t);t)\|^2_{H}\right] \Big)\mathrm dt\bigg).
\end{align*}%
In the following we will leave out the terms which do not depend on~$u$ or~$y$ since they have no effect on the minimizer of~$\mathcal{J}_{\rm quad}$. This leads to the linear quadratic optimal control problem~$\min_{u,y} J_{\rm quad}$, subject to~\eqref{eq:sys}, where
\begin{align}
\begin{split}\label{eq:LQR}
    J_{\rm quad}(u,y) &= \frac{1}{2} \bigg( \int_0^T \Big( \mathbb{E}\left[\|\mathcal{Q}_{C,\omega}(\bar{y}(\cdot;t))^{1/2}(y(\cdot;t) -  \widetilde{g}(\cdot;t))\|_{H}^2\right]+ \|u(t)\|_U^2 \Big)\,\mathrm dt\\
    &\quad+\mathbb{E}\left[\|\mathcal{Q}_{P,\omega}(\bar{y}(\cdot;T))^{1/2}(y(\cdot;T) -  \widetilde{g}_T(\cdot))\|_{H}^2\right]\bigg)
\end{split}
\end{align}
with target~$\widetilde{g}(\bssigma;t) = \bar{y}(\bssigma;t)-\mathcal{Q}_{C,\omega}(\bar{y}(\bssigma;t))^{-1} \mathscr{R}_C(\bar{y}(\bssigma;t);t)$ and terminal target~$\widetilde{g}_T(\bssigma) = \bar{y}(\bssigma;T)-\mathcal{Q}_{P,\omega}(\bar{y}(\bssigma;T);T)^{-1} \mathscr{R}_P(\bar{y}(\bssigma;T);T)$. For~$\theta \to 0$, the risk-adjusted quadratic objective functional~\eqref{eq:LQR} coincides with the risk-neutral case, that is with~\eqref{eq:J} with~$\mathcal{R} = \mathbb{E}$.

\section{Optimal feedback law of the quadratic approximation}\label{sec:optLQR}
We shall be interested in an optimal feedback law~$\mathscr{K}(\cdot;t): L^2(\mathfrak{S};H) \to U$ for the linear quadratic probem~$\min_{u,y}J_{\rm quad}(u,y)$ subject to~\eqref{eq:sys}.
It will turn out that this feedback law is affine, that is
$$
\mathscr{K}(\cdot;t) = -B^\ast (\Pi(T-t) (\cdot) + h(t)),
$$
where~$\Pi$ solves a Riccati equation. For our main result, we will adapt~\cite[Part~IV, Ch.~2, Thm.~7.3]{bensoussan2007representation} to the setting presented in this work. To this end, we recall the extension of the pointwise operator~$A(\bssigma) \in \mathcal{L}(V,V')$ for~$\bssigma \in \mathfrak{S}$ to Lebegue--Bochner spaces given by
$$\mathcal{A} \in \mathcal{L}(L^2(\mathfrak{S};V),L^2(\mathfrak{S};V')), \quad \mathcal{A} v := [\bssigma \mapsto A(\bssigma) v(\bssigma)].$$ 
Secondly, we introduce the cone~$\Omega(L^2(\mathfrak{S};H))$ of bounded, linear, self-adjoint, and nonnegative operators in~$L^2(\mathfrak{S};H)$ endowed with the norm
of~$\mathcal{L}(L^2(\mathfrak{S};H))$. The Riccati operators will be sought as strongly continuous
operator-valued functions in the set~$\mathcal{S} := \mathcal{C}_s([0,T];\Omega(L^2(\mathfrak{S};H)))$, which is endowed with the topology of strong convergence, i.e., $F_n \to F$ holds if and only if for all $x \in L^2(\mathfrak{S};H)$ it holds that $F_n(\cdot) x \to F(\cdot) x$ in $\mathcal{C}([0,T];L^2(\mathfrak{S};H))$, see e.g.,~\cite[Part IV, Ch.~2.1]{bensoussan2007representation}.

Thirdly, we verify that~$t \mapsto \mathcal{Q}_{C,\omega}(\bar{y}(\cdot;t);t)x \in \mathcal{C}([0,T];L^2(\mathfrak{S};H))$
for any~$x \in L^2(\mathfrak{S};H)$. Indeed, for~$x \in L^2(\mathfrak{S};H)$, we have
\begin{align*}
\mathcal{Q}_{C,\omega}(\bar{y}(\cdot;t);t)x  
&=\frac{\mathrm e^{\theta \|C(\bar{y}(\cdot;t)-g(t))\|_H^2}}{\mathbb{E} \left[\mathrm e^{\theta \|C(\bar{y}(\cdot;t)-g(t))\|_H^2} \right]} \bigg( 2\, C^\ast Cx  + 4 \theta\, \overline{\mathscr{C}}(\cdot;t)  \langle \overline{\mathscr{C}}(\cdot;t), x \rangle_{H} \\
&\!\!\!\!\!\!\!\!\!\!\!\!\!\!\!\!\!- 4 \theta\, \overline{\mathscr{C}}(\cdot;t)\mathbb{E}_{\omega_{\theta,C}(\bar{y}(\bssigma;t))} \Big[ \langle \overline{\mathscr{C}}(\cdot;t), x \rangle_{H}\Big] \bigg) \in \mathcal{C}([0,T];L^2(\mathfrak{S};H)).
\end{align*}

Finally, the above mentioned result~\cite[Part~IV, Ch.~2, Thm.~7.3]{bensoussan2007representation} is stated for the case~$\widetilde{g} = \widetilde{g}_T = 0$. A generalization to tracking problems with nontrivial targets can be found, e.g., in~\cite[Ch.~III, Eqn.~(4.88)]{Lions1}. The latter result can readily be generalized to nontrivial terminal penalizations (as in~\cite[Part~IV, Ch.~2, Thm.~7.3]{bensoussan2007representation}). Thus, we arrive at the following result.

\begin{theorem}\label{thm:main}
The problem~$\min_{u,y} J_{\rm quad}(u,y)$ subject to~\eqref{eq:sys} has a unique optimal pair~$(u,y)$, and the optimal control~$u \in \mathcal{C}([0,T];U)$ is related to the optimal state by the feedback formula
\begin{align*}
    u(t) = -B^\ast ( \Pi(T-t) y(\cdot;t) + h(t)),
\end{align*}
for~$t\in [0,T]$, where~$\Pi \in \mathcal{S}$ and~$h \in W_T(L^2(\mathfrak{S};V);L^2(\mathfrak{S};V'))$ solve
\begin{align*}
    -\dot{\Pi}(T-t) &= \Pi(T-t) \mathcal{A} + \mathcal{A}^\ast \Pi(T-t) - \Pi(T-t) B B^\ast \Pi(T-t) + \mathcal{Q}_{C,\omega}(\bar{y}(\cdot;t);t),\\
    -\dot{h}(t) &=  (\mathcal{A}^\ast - \Pi(T-t) B B^\ast) h(t) + \Pi(T-t) f(t) - \mathcal{Q}_{C,\omega}(\bar{y}(\cdot;t);t) \widetilde{g}(\cdot;t;\bar{y}(\cdot;t)),
\end{align*}
with~$\Pi(0) = \mathcal{Q}_{P,\omega}(\bar{y}(\cdot;T);T)$ and~$ h(T) = - \mathcal{Q}_{P,\omega}(\bar{y}(\cdot;T);T) \widetilde{g}_T(\cdot,\bar{y}(\cdot;T))$. 
\end{theorem}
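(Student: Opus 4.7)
The plan is to recast $\min_{u,y} J_{\rm quad}$ as a standard abstract linear quadratic regulator on the Gelfand triple $L^2(\mathfrak{S};V) \hookrightarrow L^2(\mathfrak{S};H) \hookrightarrow L^2(\mathfrak{S};V')$ and then invoke the Riccati synthesis of~\cite[Part~IV, Ch.~2, Thm.~7.3]{bensoussan2007representation} together with the target-tracking extension of~\cite[Ch.~III, Eqn.~(4.88)]{Lions1}. First I would lift the parameter-dependent dynamics~\eqref{eq:sys} to $\dot{y} = \mathcal{A} y + \mathcal{B} u + \mathcal{F}$ with $y(0) = \mathcal{Y}_0$ posed on $L^2(\mathfrak{S};V')$, where $\mathcal{B}u := [\bssigma \mapsto B u]$ embeds $U$ into the $\bssigma$-constant subspace of $L^2(\mathfrak{S};H)$ and $\mathcal{F}, \mathcal{Y}_0$ are the analogous constant lifts of $f$ and $y_0$. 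Coercivity and boundedness of $\mathcal{A}$ between $L^2(\mathfrak{S};V)$ and $L^2(\mathfrak{S};V')$ follow pointwise in $\bssigma$ from~\eqref{eq:A2}--\eqref{eq:A3}, placing the control system in the classical abstract parabolic framework.

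Second, I would verify the structural hypotheses of the cited Riccati theorems for the coefficient operators in~\eqref{eq:LQR}. The operators $\mathcal{Q}_{C,\omega}(\bar{y}(\cdot;t);t)$ and $\mathcal{Q}_{P,\omega}(\bar{y}(\cdot;T);T)$ lie in $\Omega(L^2(\mathfrak{S};H))$ by the nonnegativity and self-adjointness established in Section~\ref{sec:risk-adj}, and the required strong continuity $t \mapsto \mathcal{Q}_{C,\omega}(\bar{y}(\cdot;t);t) x \in \mathcal{C}([0,T];L^2(\mathfrak{S};H))$ for every $x$ is precisely the computation carried out just before the theorem statement using $\bar{y} \in \mathcal{C}([0,T];\mathcal{C}(\mathfrak{S};H))$. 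Existence and uniqueness of the optimal pair $(u,y)$ for $J_{\rm quad}$ then follow by the direct method as in Section~\ref{sec:probl_form}, since $J_{\rm quad}$ is strongly convex in $u$ via the $\|u\|_U^2$ term, coercive, and weakly lower semicontinuous.

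To derive the feedback formula I would start from the first-order optimality condition. Introducing the adjoint state $p \in W_T(L^2(\mathfrak{S};V),L^2(\mathfrak{S};V'))$ satisfying
\begin{align*}
-\dot{p}(t) &= \mathcal{A}^\ast p(t) + \mathcal{Q}_{C,\omega}(\bar{y}(\cdot;t);t)(y(\cdot;t) - \widetilde{g}(\cdot;t)),\\
p(T) &= \mathcal{Q}_{P,\omega}(\bar{y}(\cdot;T);T)(y(\cdot;T) - \widetilde{g}_T(\cdot)),
\end{align*}
the optimality condition reads $u(t) + \mathcal{B}^\ast p(t) = 0$, where $\mathcal{B}^\ast = B^\ast \mathbb{E}[\,\cdot\,]$ is the adjoint of the $\bssigma$-constant embedding and is the sense in which the symbol $B^\ast$ in the statement has to be read. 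Substituting the Riccati ansatz $p(t) = \Pi(T-t) y(\cdot;t) + h(t)$, differentiating in $t$, inserting the lifted state equation, and matching the parts linear in $y$ against the $y$-independent remainder produces the stated backward Riccati equation for $\Pi$ and the affine equation for $h$; the terminal data of $p$ fix $\Pi(0) = \mathcal{Q}_{P,\omega}(\bar{y}(\cdot;T);T)$ and $h(T) = -\mathcal{Q}_{P,\omega}(\bar{y}(\cdot;T);T)\widetilde{g}_T$.

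The main obstacle I anticipate is the regularity of the affine term $h$. Since $\Pi$ is only strongly (and not norm) continuous, the equation for $h$ has to be interpreted as a mild solution in $W_T(L^2(\mathfrak{S};V), L^2(\mathfrak{S};V'))$ driven by $\Pi(T-t) f - \mathcal{Q}_{C,\omega}(\bar{y};t)\widetilde{g}$ together with the terminal impulse $-\mathcal{Q}_{P,\omega}(\bar{y};T)\widetilde{g}_T$. The result~\cite[Part~IV, Ch.~2, Thm.~7.3]{bensoussan2007representation} covers only the case $\widetilde{g} = \widetilde{g}_T = 0$, so the tracking terms require the extension of~\cite[Ch.~III, Eqn.~(4.88)]{Lions1} and its adaptation to a nontrivial terminal penalization, which is the technical point flagged just above the theorem statement.
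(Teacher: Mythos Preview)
Your proposal is correct and follows essentially the same route as the paper: lift~\eqref{eq:sys} to the Gelfand triple over~$L^2(\mathfrak{S};H)$ via~$\mathcal{A}$, verify that~$\mathcal{Q}_{C,\omega}$ and~$\mathcal{Q}_{P,\omega}$ satisfy the nonnegativity, self-adjointness and strong continuity hypotheses, and then invoke~\cite[Part~IV, Ch.~2, Thm.~7.3]{bensoussan2007representation} together with the tracking extension~\cite[Ch.~III, Eqn.~(4.88)]{Lions1} for the affine part~$h$. The paper's argument is in fact terser than yours---it does not spell out the adjoint equation or the Riccati ansatz $p(t)=\Pi(T-t)y(\cdot;t)+h(t)$ explicitly but simply cites these references after recording the strong-continuity check---so your more detailed derivation is a faithful unpacking of exactly what those citations deliver.
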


\section{Convergence of the quadratic approximation}\label{sec:SQP}

Recall that the unique optimal pair of~$\min_{u,y}J_{\rm quad}$ subject to~\eqref{eq:sys} coincides with the unique optimal pair of~$\min_{u,y}\mathcal{J}_{\rm quad}$ subject to~\eqref{eq:sys} since the two cost functionals differ only by a term which is independent of both the control~$u$ and state~$y$.

We shall show next, that the sequence of minimizers~$\{(u^{(k)}_\star,y^{(k)}_\star)\}_{k \ge 0}$ generated by repeatedly solving the linear quadratic problem with updated expansion points~$\bar{y}^{(k)} = y^{(k-1)}_\star$ converges with second order to the unique minimizer of the original problem~\eqref{eq:J}--\eqref{eq:sys} provided that the initial guess is sufficiently close to the minimizer, as will be made precise below.

To this end, we show that the optimality system of the quadratic approximation $\min_{u,y}\mathcal{J}_{\rm quad}$ subject to~\eqref{eq:sys} is a Newton step for the original problem~\eqref{eq:J}--\eqref{eq:sys}. Thus, repeatedly solving the quadratic approximation with updated expansion points results in a so-called sequential quadratic programming (SQP) method for solving the original problem.
The SQP method is based on the Lagrangian
$$
\mathscr{L}(u,y,q) = \mathcal{J}(u,y) + \langle q, e(u,y)\rangle_{L^2(0,T;L^2(\mathfrak{S};V))\times L^2(\mathfrak{S};H),L^2(0,T;L^2(\mathfrak{S};V'))\times L^2(\mathfrak{S};H)}.
$$
where we abbreviated~$e(u,y) = D(\cdot) y(\cdot) - (Bu+f,y_0)$. Let~$q_\star = q(y_\star)$ denote the Lagrange multiplier at the solution~$y_\star$ of~\eqref{eq:J}--\eqref{eq:sys}, then a first-order necessary optimality condition is 
\begin{align}\label{eq:FOCLagrange}
    \mathscr{L}'(u_\star,y_\star,q_\star) = 0,\quad e(u,y) = 0,
\end{align}
where~$\mathscr{L}'$ denotes the derivative of~$\mathscr{L}$ with respect to~$(u,y)$, that is
$$
\mathscr{L}'(u,y,q) = \begin{bmatrix} e_u(u,y)^\ast q + \mathcal{J}_u(u,y) \\ e_y(u,y)^\ast q + \mathcal{J}_y(u,y) \end{bmatrix}, \quad \text{with} \quad e_u = \frac{\partial}{\partial u}e(u,y), \quad e_y = \frac{\partial}{\partial y}e(u,y).
$$
The SQP method consists in a Newton method applied to the necessary optimality condition~\eqref{eq:FOCLagrange} in order to iteratively solve for the solution~$(u_\star,y_\star,q_\star)$ of the original problem~\eqref{eq:J}--\eqref{eq:sys}. Each Newton step results in the following linear system for the updates:
\begin{align*}
    \begin{bmatrix} \mathscr{L}''(u^{(k)},y^{(k)},q^{(k)}) & e'(u^{(k)},y^{(k)})^\ast\\ e'(u^{(k)},y^{(k)}) & 0\end{bmatrix} \begin{bmatrix} \begin{bmatrix} u^{(k+1)} - u^{(k)} \\ y^{(k+1)} - y^{(k)} \end{bmatrix} \\ q^{(k+1)} - q^{(k)} \end{bmatrix} = - \begin{bmatrix} e_u(u^{(k)},y^{(k)})^\ast q^{(k)} + \mathcal{J}_u(u^{(k)},y^{(k)}) \\ e_y(u^{(k)},y^{(k)})^\ast q^{(k)} + \mathcal{J}_y(u^{(k)},y^{(k)}) \\ e(u^{(k)},y^{(k)}) \end{bmatrix}.
\end{align*}
A sufficient second-order condition for optimality of~$(u_\star,y_\star,q_\star)$ is
\begin{align}\label{eq:suffLagrangian}
    \mathscr{L}''(u_\star,y_\star,q_\star)((v,x),(v,x)) \ge \kappa \|(v,x)\|_{U_T\times W_T(L^2(\mathfrak{S};V),L^2(\mathfrak{S};V'))}^2,
\end{align}
for~$\kappa >0$, and for all~$(v,x)$ in the kernel of~$e'(u_\star,y_\star)$, which is given by~$\{(v,x) \in U_T \times W_T(L^2(\mathfrak{S};V),L^2(\mathfrak{S};V'))) \mid \dot{x} = \mathcal{A}x + Bv, \, x(0) = 0\}$. By the linearity of the constraint~$e(u,y)$ we have~$e_{uu} = e_{uy} = e_{yu} = e_{yy} = 0$, and thus~$\mathscr{L}''(u,y,q) = \mathcal{J}''(u,y)$. Further, from Section~\eqref{sec:risk-adj} we know that~$\mathcal{J}''(u,y)((v,x),(v,x)) \ge \|v\|^2_{U_T}$. With the bound~$\|x\|_{W_T(L^2(\mathfrak{S};V),L^2(\mathfrak{S};V'))} \le C \|u\|_{U_T}$ for all~$(v,x)$ in the kernel of~$e'(u_\star,y_\star)$, we further estimate~$\mathcal{J}''(u,y)((v,x),(v,x)) \ge \frac{1}{2}\|v\|^2_{U_T} + \frac{1}{2C^2}\|x\|^2_{W_T(L^2(\mathfrak{S};V),L^2(\mathfrak{S};V'))}$. Thus, the sufficient condition~\eqref{eq:suffLagrangian} is satisfied for~$\kappa = \min\{\frac{1}{2},\frac{1}{2C^2}\}$. Moreover, the Newton step simplifies to
\begin{align*}
    \begin{bmatrix} \mathbf{1}_U & 0 &e_u(u^{(k)},y^{(k)})^\ast \\ 0 & \mathcal{J}_{yy}(u^{(k)},y^{(k)}) & e_y(u^{(k)},y^{(k)})^\ast \\ e_u(u^{(k)},y^{(k)}) & e_y(u^{(k)},y^{(k)}) & 0\end{bmatrix} \begin{bmatrix} \begin{bmatrix} u^{(k+1)} - u^{(k)} \\ y^{(k+1)} - y^{(k)} \end{bmatrix} \\ q^{(k+1)}  \end{bmatrix}  = - \begin{bmatrix}  \mathcal{J}_u(u^{(k)},y^{(k)}) \\  \mathcal{J}_y(u^{(k)},y^{(k)}) \\ e(u^{(k)},y^{(k)}) \end{bmatrix},
\end{align*}
which is the optimality system of~\eqref{eq:quadJ} subject to~\eqref{eq:sys}. Consequently, in each Newton step one can equivalently solve the quadratic subproblem~\eqref{eq:quadJ} (or~\eqref{eq:LQR}) subject to~\eqref{eq:sys} with expansion trajectory~$\bar{y} = y^{(k)}$.

In order to ensure that the Fr\'echet derivatives are well-defined at all iterates, we recall from Section~\ref{sec:section3} that this is ensured for pairs~$(u,y) \in Z$, i.e., pairs with additional state regularity~$y \in \mathcal{C}([0,T];\mathcal{C}(\mathfrak{S};H))$. Thus, we choose an initial trajectory~$\bar{y}^{(-1)} \in \mathcal{C}([0,T];\mathcal{C}(\mathfrak{S};H))$. From the Newton step above it is evident that~$e(u_{k+1},y_{k+1}) = 0$, and thus~$(u_{k+1},y_{k+1}) \in Z$, i.e., the iterates remain in~$Z$. We next show that~$Z$ is a closed (affine) subspace of~$X := L^2(0,T;U) \times W_T(L^2(\mathfrak{S};V),L^2(\mathfrak{S};V'))$: clearly~$Z \subset X$. To verify that~$Z$ is an affine subspace, let
$$
Z_0 := \{ (u,y) \in L^2(0,T;U) \times W_T(L^2(\mathfrak{S};V),L^2(\mathfrak{S};V')) \mid \dot{y} - \mathcal{A} y - Bu= 0,\,y(0) = 0\}.
$$
It is readily verified that~$Z_0$ is a linear subspace of~$X$. Furthermore, for arbitrary~$f \in L^2(0,T;V')$ and~$y_0 \in H$, there exists a pair~$(v_0,x_0)$ such that~$\dot{x}_0 - \mathcal{A} x_0 - Bv_0 -f= 0$ with~$x_0(0) = y_0$. Furthermore, any pair~$(v,x)\in Z$ can be expressed as a translation~$(v_0,x_0) + (\tilde{v}_0,\tilde{x}_0)$, where~$(\tilde{v}_0,\tilde{x}_0) \in Z_0$, since~$\dot{x} - \dot{x}_0 - \mathcal{A} (x - x_0) - B(v-v_0) = 0$, and~$x(0) - x_0(0) = 0$. Thus,~$Z = (v_0,x_0) + Z_0$ is an affine subspace of~$X$. We will next show that~$Z$ is closed in~$X$. For~$n \in \bbN$ let~$(u_n,y_n) \in Z$ be a sequence such that~$(u_n,y_n) \to (u,y) \in U_T \times W_T(L^2(\mathfrak{S};V),L^2(\mathfrak{S};V'))$. Then, it holds that~$Bu_n \to Bu$ since~$B$ is a bounded linear operator,~$\dot{y}_n \to \dot{y}$, and~$\mathcal{A}y_n \to \mathcal{A}y$ by continuity of~$\mathcal{A}$. Thus, taking the limit of~$\dot{y}_n(\bssigma) = \mathcal{A} y_n(\bssigma) + B u_n + f$,~$y_n(\bssigma;0,\cdot) = y_0$, we arrive at
\begin{align*}
    \dot{\bar{y}}(t) = \mathcal{A} \bar{y}(t) + B \bar{u}(t) + f(t), \quad\text{for a.e.~}t\in [0,T],\quad\text{and}\quad \bar{y}(\cdot;0,\cdot) = y_0,
\end{align*}
that is,~$Z$ is closed. Further,~$\mathcal{J}$ and~$e$ are smooth with locally Lipschitz continuous second derivatives, and~$e'(u_\star,y_\star)$ is surjective. Under these conditions it is known (see, \cite[Ch.~5.3]{ItoKunisch}) that the SQP method is locally quadratic convergent to the unique global minimizer of~\eqref{eq:J}--\eqref{eq:sys}, that is
\begin{align*}
    \|(u^{(k+1)},y^{(k+1)},q^{(k+1)}) - (u_\star,y_\star,q_\star)\|_{W} \le K \|(u^{(k)},y^{(k)},q^{(k)}) - (u_\star,y_\star,q_\star)\|_{W}^2,
\end{align*}
for a constant~$K$ independent of~$k$, provided that~$\|(u^{(0)},y^{(0)},q^{(0)}) - (u_\star,y_\star,q_\star)\|_{W}$ with~$W = U_T \times W_T(L^2(\mathfrak{S};V),L^2(\mathfrak{S};V'))) \times L^2(0,T;L^2(\mathfrak{S};V))\times L^2(\mathfrak{S};H)$ is sufficiently small.

\begin{algorithm}
\label{alg:SQP}
\begin{algorithmic}[1]SQP Algorithm
\Require Tolerance~$\varepsilon>0$. Initial expansion trajectory $\bar{y}^{(-1)} \in \mathcal{C}([0,T];\mathcal{C}(\mathfrak{S};H))$.
\Ensure Approximate minimizer~$(u_\star,y_\star)$ of~\eqref{eq:J}--\eqref{eq:sys} with control in feedback form.
\State Set $k \gets 0$ and~$y^{(k-1)}_\star \gets \bar{y}^{(-1)}$.
\Repeat
\State Given the expansion point~$\bar{y} = y^{(k-1)}_\star$, solve the equations in Theorem~\ref{thm:main} to find 
\NoLNState the optimal Riccati-based feedback control~$u_\star^{(k)}$ with corresponding closed-loop 
\NoLNState state~$y_\star^{(k)}$ of~\eqref{eq:LQR} s.t.~\eqref{eq:sys}. 
    \State $k \gets k + 1$.
\Until{$\|\nabla J(u_\star^{(k)})\|_{U_T} < \varepsilon$}.
\State \Return $(u^{(k)}_\star,y^{(k)}_\star) \approx (u_\star,y_\star) $, where the control is given in feedback form.
\end{algorithmic}
\end{algorithm}

The optimal control that is computed using Algorithm~\ref{alg:SQP} is in feedback form for the linear quadratic approximation of the original problem~\eqref{eq:J} subject to~\eqref{eq:sys} evaluated at the optimal trajectory.

\section{Discretization and numerical implementation}

This section is devoted to the numerical implementation of the dynamical systems appearing in Theorem~\ref{thm:main}.

\subsection{Numerical integration}
The expected values involved in the presented problem are given as integrals over an infinite-dimensional parameter space~$\mathfrak{S}$, see~\eqref{eq:exp_int}. For the numerical implementation we first truncate the infinite-dimensional parameter sequence~$\bssigma$ to an~$s$-dimensional vector~$\bssigma_s$ by setting all components with index larger than~$s$ equal to zero. In the follwoing we will thus consider~$\bssigma \approx (\bssigma_s,\boldsymbol{0}) = (\sigma_1,\sigma_2,\ldots,\sigma_s,0,0,\ldots)$. This allows to replace the infinite-dimensional integrals by integrals over the~$s$-dimensional parameter space~$\mathfrak{S} := [-1,1]^s$. 

The numerical approximation of these integrals using full tensorgrids of univariate quadrature rules suffers from the curse of dimensionality, i.e., the number of nodes required to guarantee a prescribed error tolerance increases exponentially in the dimension~$s$. For sufficiently smooth problems this issue can be alleviated using sparse grids~\cite{Bungartz_Griebel_2004}, or quasi-Monte Carlo methods~\cite{Dick_Kuo_Sloan_2013}. In this work we shall focus on sample average approximations of the respective integrals. This includes Monte Carlo as well as quasi-Monte Carlo methods and the presented results can straightforwardly be modified to other quadrature methods.

\subsection{Polynomial chaos expansion}
Let~$L_n(\xi)$ denote the Legendre
polynomial of degree~$n \ge 0$ in~$[-1,1]$, which is normalized such that
$$
\int_{-1}^{1} |L_n(\xi)|^2 \frac{\mathrm d\xi}{2} = 1
$$
Then we have~$L_0 = 1$ and~$\{L_n\}_{n\ge 0}$ is an orthonormal basis of~$L^2(-1, 1)$. For~$\bsnu \in \mathscr{F}$ we introduce the tensorized Legendre polynomials
$$ L_{\bsnu}(\bssigma) = \prod_{j \ge 1} L_{\nu_j} (\sigma_j).
$$
The tensorized Legendre polynomials~$\{L_\bsnu(\bssigma) \mid \bsnu \in \mathscr{F}\}$ form a Riesz basis, i.e. a dense, orthonormal
family in~$L^2(\mathfrak{S}; \mathrm d\bssigma)$; in particular, each~$\delta \in L^2(\mathfrak{S}; \mathrm d\bssigma; H)$ admits an orthogonal expansion
$$
\delta(\bssigma) = \sum_{\bsnu \in \mathscr{F}} \delta_\bsnu L_\bsnu(\bssigma), \quad \text{where} \quad \delta_\bsnu = \int_{\mathfrak{S}} \delta(\bssigma) L_\bsnu(\bssigma) \,\mathrm d\bssigma\in H
$$

In this manuscript we shall be interested in functions in~$\delta \in L^2(0,T;L^2(\mathfrak{S};\mathrm d\bssigma;H))$, thus for almost every~$t \in [0,T]$ the corresponding coefficients~$\{\delta_\bsnu\}_{\bsnu \in \mathscr{F}}$ are elements in~$H$.

\subsubsection{Stochastic Galerkin approximation of a parametric dynamical system}

We wish to approximate the solution~$y(\boldsymbol{\sigma}; t)$ of~\eqref{eq:sys} using a (truncated) generalized polynomial chaos expansion
\begin{equation}\label{eq:stochGapprox}
y(\boldsymbol{\sigma}; t) \approx \sum_{\boldsymbol{\nu} \in \Lambda} y_{\boldsymbol{\nu}}(t) L_{\boldsymbol{\nu}}(\boldsymbol{\sigma}),
\end{equation}
where~$\Lambda := \{ \bsnu \in \mathscr{F} : |\bsnu|\le p\}$ is a finite index set, and~$ L_{\boldsymbol{\nu}}(\boldsymbol{\sigma})$ are the tensorized orthonormal Legendre polynomials. The bound~$p$ on the cardinality of the multi-indices in~$\Lambda$ equals the highest degree of the polynomials, and hence the total number of coefficients in a truncated expansion of the form~\eqref{eq:stochGapprox} is~$K+1 = \frac{(s+p)!}{s!p!}$. Thus, we can number the indices as~$\Lambda = \{\bsnu_1,\bsnu_2,\ldots,\bsnu_{K+1}\}$. 
Furthermore, for almost every~$t \in [0,T]$ the coefficients~$\{y_\bsnu(t)\}_{\bsnu \in \mathscr{F}}$ are elements in~$H$.

\paragraph{General Case}

Substituting the polynomial expansion into the equation \eqref{eq:sys} yields
\begin{align*}
\sum_{\bsnu \in \Lambda} \dot{y}_{\bsnu}(t) L_{\bsnu}(\boldsymbol{\sigma}) = {A}(\boldsymbol{\sigma}) \sum_{\bsnu \in \Lambda} y_{\bsnu}(t) L_{\bsnu}(\boldsymbol{\sigma}) + Bu(t)+ f(t).
\end{align*}
Next we project both sides onto the basis functions~$\{L_{\bsm}\}_{\bsm \in \Lambda}$ by taking the inner product in~$L^2(\mathfrak{S};\mathrm d\bssigma)$
\begin{align*}
\Big\langle \sum_{\bsnu \in \Lambda} \dot{y}_{\bsnu}(t) L_{\bsnu}(\boldsymbol{\sigma}), L_{\bsm}(\boldsymbol{\sigma}) \Big\rangle = 
\Big\langle {A}(\boldsymbol{\sigma}) \sum_{\bsnu \in \Lambda} y_{\bsnu}(t) L_{\bsnu}(\boldsymbol{\sigma}), L_{\bsm}(\boldsymbol{\sigma}) \Big\rangle + \Big\langle B u(t) + f(t), L_{\bsm}(\boldsymbol{\sigma}) \Big\rangle,
\end{align*}
for~$\bsm \in \Lambda$. Using the orthonormality of the basis, this simplifies to 
\begin{align*}
\dot{y}_{\bsm}(t) = \sum_{\bsnu \in \Lambda} {A}^{(\bsnu, \bsm)} y_{\bsnu}(t) + \delta_{\boldsymbol{m}, \mathbf{0}}(Bu(t)+f(t)),\quad \bsm \in \Lambda,
\end{align*}
where the operator-valued coefficients are defined as
\begin{align*}
{A}^{(\bsnu, \bsm)} := \left\langle {A}(\boldsymbol{\sigma}) L_{\bsnu}(\boldsymbol{\sigma}), L_{\bsm}(\boldsymbol{\sigma}) \right\rangle_{L^2(\mathfrak{S};\mathrm d\bssigma)} \in \mathcal{L}(V,V'),
\end{align*}
and
\begin{align*}
\delta_{\boldsymbol{m}, \mathbf{0}}(Bu(t)+f(t)) = \left\langle B u(t) + f(t), L_{\bsm}(\boldsymbol{\sigma}) \right\rangle_{L^2(\mathfrak{S};\mathrm d\bssigma)} = \begin{cases}
Bu(t) + f(t), & \text{if } \bsm = \mathbf{0}, \\
0, & \text{otherwise}.
\end{cases},
\end{align*}
since~$B$,~$u$, and~$f$ are deterministic.
Denoting~${A}^{\text{GPC}} := \left( {A}^{(\bsnu, \bsm)} \right)_{\bsm, \bsnu \in \Lambda}$, the Galerkin system becomes the fully coupled system
\begin{align*}
\dot{\mathbf{y}}(t) = {A}^{\text{GPC}} \mathbf{y}(t) + (B u(t) + f(t)) \otimes e_0,
\end{align*}
where~$\mathbf{y}(t) = [y_{\bsnu_1}(t), \dots, y_{\bsnu_{K+1}}(t)]^\top \in H^{K+1} = \times_{j = 0}^K H$ is the stacked coefficient vector, and $(B u(t) + f(t)) \otimes e_0 = [Bu(t) +f(t), 0, \dots, 0]^\top$ is the projection onto the first polynomial.

\paragraph{Affine parameter dependence}

In many situations, such as in diffusion or reaction problems in which the random input field is parameterized in terms of a Karhunen--Lo\`eve expansion, the operator~${A}(\boldsymbol{\sigma})$ depends affinely on the parameters, that is
\begin{align*}
{A}(\boldsymbol{\sigma}) = {A}_0 + \sum_{j=1}^{s} \sigma_j {A}_j,
\end{align*}
where~${A}_j \in \mathcal{L}(V,V')$ for~$j = 0, \dots, s$, for some~$s \in \mathbb{N}$. Substituting this into the Galerkin projection and using orthonormality, we get
\begin{align*}
\dot{y}_{\boldsymbol{m}}(t) = {A}_0 y_{\boldsymbol{\nu}}(t) + \sum_{j=1}^{s} \sum_{\boldsymbol{\nu} \in \Lambda} {A}_j y_{\boldsymbol{\nu}}(t) \langle \sigma_j L_{\boldsymbol{\nu}}(\boldsymbol{\sigma}), L_{\boldsymbol{m}}(\boldsymbol{\sigma}) \rangle + \delta_{\boldsymbol{m}, \mathbf{0}} (B u(t) + f(t)), \quad \bsm \in \Lambda.
\end{align*}
Since the basis functions are tensorized polynomials, and we use a product measure, we can factor the ($s$-dimensional) integral over the parameter domain and use the orthonormality
\begin{align*}
[M_j]_{\boldsymbol{\nu}, \boldsymbol{\mu}} := \langle \sigma_j L_{\boldsymbol{\nu}}(\boldsymbol{\sigma}), L_{\boldsymbol{m}}(\boldsymbol{\sigma}) \rangle_{L^2(\mathfrak{S};\mathrm d\bssigma)} = \prod_{i=1, i \neq j}^s \delta_{\nu_i, m_i} \cdot \int_{-1}^1 \sigma_j L_{\nu_j}(\sigma_j) L_{m_j}(\sigma_j) \,\frac{\mathrm d\sigma_j}{2},
\end{align*}
and thus only univariate integrals need to be computed. Then, the full Galerkin system becomes
\begin{align*}
\dot{\mathbf{y}}(t) = \left( {A}_0 \otimes I + \sum_{j=1}^{s} {A}_j \otimes M_j \right) \mathbf{y}(t) + (B u(t) +f(t)) \otimes e_0,
\end{align*}
where~$\otimes$ denotes the tensor product.

\subsection{Choice of the control operator~$B$}
The control input~$u \in U_T$ is modeled as a deterministic function, that is, it is constant as a function of~$\bssigma \in \mathfrak{S}$. The polynomial chaos expansion of the control takes the form
$$
u(t) = u(\bssigma;t) = \sum_{\bsnu \in \mathscr{F}} u_{\bsnu}(t) L_{\bsnu}(\bssigma), \quad \text{where} \quad\begin{cases} u_\bsnu(t) = u(t) \in U_T & \text{if }\bsnu = \boldsymbol{0},\\
u_\bsnu(t) = 0 & \text{otherwise}.\end{cases}
$$
Thus, in the (truncated) stochastic Galerkin basis the control operator~$B: U \to H^{K+1}$ can be chosen to act only on the first polynomial, that is, the zeroth-order coefficients
$$\begin{bmatrix} B u_{\bsnu_1}(t), B u_{\bsnu_2}(t),\ldots, B u_{\bsnu_{K+1}}(t)\end{bmatrix} = \begin{bmatrix} B u_{\bsnu_1}(t), 0,\ldots, 0\end{bmatrix}= \begin{bmatrix} B u(t), 0,\ldots, 0\end{bmatrix},$$
where~$\bsnu_1 = \boldsymbol{0}$. Below, by slight abuse of notation we use the same symbol for the control operator~$B: U\to H$, its extension to the Bochner space~$B: U \to L^2(\mathfrak{S};H)$, and the Galerkin discretization of the latter~$B: U \to H^{K+1}$. 

We will now show, that the adjoint operator of~$B$ naturally involves the expected value. Indeed, for~$v \in L^2(\mathfrak{S};H)$, we have
\begin{align*}
    \langle u, B^\ast v\rangle_{U} = \langle B u,v\rangle_{L^2(\mathfrak{S};H)} &= \int_{\mathfrak{S}} \langle B u, v(\bssigma) \rangle_H \,\mathrm d\bssigma = \int_{\mathfrak{S}} \langle u, B^\ast v(\bssigma) \rangle_H \,\mathrm d\bssigma \\&= \langle u, \int_{\mathfrak{S}} B^\ast v(\bssigma) \,\mathrm d\bssigma  \rangle_U = \langle u, B^\ast \int_{\mathfrak{S}} v(\bssigma) \,\mathrm d\bssigma  \rangle_U.
\end{align*}

With this choice of the control operator, the optimal control of a linear quadratic problem in the polynomial chaos basis is then implemented as
\begin{align}\label{eq:ufbGPC}
\begin{split}
    u(t) = -B^\ast \Pi(T-t) y(\bssigma;t) &\approx - \sum_{\bsnu \in \Lambda} B^\ast \Pi_\bsnu(T-t)  y_\bsnu(t) L_\bsnu(\bssigma) \\ &=- B^\ast \bigg( \sum_{j = 1}^{K+1} (\Pi_{\bsnu_1,\bsnu_j}(T-t) y_{\bsnu_j}(t)\bigg)L_{\bsnu_1}(\bssigma),
\end{split}
\end{align}
where for~$\bsnu 
\in \Lambda$ and all~$t\in [0,T]$, we use the notation
$$
\Pi_{\bsnu}(T-t) = \begin{bmatrix}
    \Pi_{\bsnu_1,\bsnu_1} & \Pi_{\bsnu_1,\bsnu_2} & \cdots & \Pi_{\bsnu_{1},\bsnu_{K+1}}\\
    \Pi_{\bsnu_2,\bsnu_1} & \ddots & & \vdots\\
    \vdots  & & \ddots &\vdots\\
    \Pi_{\bsnu_{K+1},\bsnu_1} & \Pi_{\bsnu_{K+1},\bsnu_2} & \cdots & \Pi_{\bsnu_{K+1},\bsnu_{K+1}}
\end{bmatrix}(T-t) \in \mathcal{L}(H^{K+1},H^{K+1}).
$$
The polynomial expansion of the feedback control in~\eqref{eq:ufbGPC} contains only zeroth-order polynomials, if $\bsnu_1 = \boldsymbol{0}$. Thus, the optimal control is a deterministic function, that is, it is constant as a function of the random parameters~$\bssigma \in \mathfrak{S}$.

\paragraph{Related work} Suppose there is a true but unknown parameter~$\bssigma = \bar{\bssigma}\in \mathfrak{S}$. In this situation one can apply the following version of the above feedback: using the same~$B$ and~$\{\Pi_{\bsnu}\}_{\bsnu \in \Lambda}$, we project the state trajectory onto the first index~$\bsnu_1$ corresponding to the zeroth-order polynomials. For~$t\in [0,T]$ and a given finite set of multi-indices~$\Lambda \subset \mathscr{F}$, the resulting feedback takes the form~$\{-B^\ast \Pi_{\bsnu}(T-t) \mathcal{E}\}_{\bsnu \in \Lambda} = -B^\ast \Pi_{\bsnu_1,\bsnu_1}(T-t)$, where~$\mathcal{E}: H \to H^{K+1}$ maps~$y \mapsto \begin{bmatrix} y^\top & 0& \cdots&0\end{bmatrix}^\top$. This feedback law has been constructed and investigated in~\cite{GuthKunRod23, GuthKunRod24} based on snapshots/samples, whereas in this manuscript we use a stochastic Galerkin ansatz.

\subsection{Choice of the expansion point~$\bar{y}$ and approximation of the weights}
The expansion point~$\bar{y}$, around which the entropic risk objective functional is approximated, is updated according to Algorithm~\ref{alg:SQP}. The initialization of Algorithm~\ref{alg:SQP} can be constructed as follows. In practical applications, when observational data are available,~$\bar{y}^{(-1)}$ may be constructed from filtered or regularized measurements, yielding a data-informed reference that incorporates empirical knowledge of the state. In the absence of data one might start, for instance, with the uncontrolled trajectories.

In every iteration of Algorithm~\ref{alg:SQP}, the weight functions are updated by evaluation at the current expansion point~$\bar{y}$, which is given as a polynomial chaos surrogate (cp.~\eqref{eq:stochGapprox}), that is
\begin{align*}
    \bar{y}(\bssigma;t) = \sum_{\bsnu \in \mathscr{F}} \bar{y}_\bsnu(t) L_\bsnu(\bssigma) \approx \sum_{\bsnu \in \Lambda} \bar{y}_\bsnu(t) L_\bsnu(\bssigma),
\end{align*}
for a finite set of multi-indices~$\Lambda \subset \mathscr{F}$. 

We select a set of quadrature nodes~$\{\bssigma^{(1)},\ldots,\bssigma^{(N)}\}$, leading, for~$t\in [0,T]$, to an ensemble of expansion trajectories~$\{\bar{y}(\bssigma^{(1)};t),\ldots,\bar{y}(\bssigma^{(N)};t)\}$, and to a collection of weights stored in a vector~$\boldsymbol{\omega}$ of length~$N$ with components 
\begin{align}\label{eq:weights}
    \omega_i(t) := \omega_{\theta,C}(\bar{y}(\bssigma^{(i)};t)) = \frac{\mathrm e^{\theta \|C(\bar{y}(\bssigma^{(i)};t)-g(t))\|_H^2}}{\frac{1}{\tilde{N}} \sum_{k=1}^{\tilde N} \left[\mathrm e^{\theta \|C(\bar{y}(\bssigma^{(k)};t)-g(t))\|_H^2} \right]},
\end{align}
with a possibly different set of quadrature nodes~$\{\bssigma^{(1)},\ldots,\bssigma^{(\tilde{N})}\}$,~$\tilde{N} \in \bbN$. We point out that each node is an~$s$-dimensional vector, that is~$\bssigma^{(i)} \in [-1,1]^s$ for all~$1 \le i \le N$. The weights~$\omega_{\theta,P}(\bar{y}(\bssigma^{(i)};T))$ are computed analogously.

\subsection{Approximation of the weighted covariance}
Given~$t \in [0,T]$ and a polynomial chaos expansion of the test functions~$\delta^X(t)\in L^2(\mathfrak{S};H)$ and~$\delta^Y(t)\in L^2(\mathfrak{S};H)$, the weighted sample average approximation of the covariance term in the bilinear form associated to the operator~$\mathcal{Q}_{C,\omega}(\bar{y}(\cdot;t);t)$ is of the form
\begin{align*}&{\rm Cov}_{\omega_{\theta,C}(y(\bssigma;t))}\Big( \langle \overline{\mathscr{C}}(\cdot;t), \delta^X(\cdot;t) \rangle_H, \langle \overline{\mathscr{C}}(\cdot;t), \delta^Y(\cdot;t) \rangle_H\Big) \\
&\quad\approx\frac{1}{N-1}\sum_{i=1}^N \bigg( \omega_i(t) \bigg(X_i(t) - \frac{1}{N}\sum_{j=1}^N \omega_j(t) X_j(t)\bigg) \bigg(Y_i(t) - \frac{1}{N}\sum_{j=1}^N \omega_j(t) Y_j(t)\bigg)\bigg),\end{align*} where, for~$t \in [0,T]$,~$X(t)$ and~$Y(t)$ are two vectors of length~$N$, which contain
\begin{align*}
    X_i(t) &:= \sum_{\bsm,\bsnu \in \mathscr{F}} \langle (C^\ast C  \bar{y}_\bsnu(t) - \delta_{\bsnu,\boldsymbol{0}}g(t)) L_\bsnu(\bssigma^{(i)}), \delta^X_\bsm(t)L_\bsm(\bssigma^{(i)})\rangle_H \\
    Y_i(t) &:= \sum_{\bsm,\bsnu \in \mathscr{F}} \langle (C^\ast C  \bar{y}_\bsnu(t) - \delta_{\bsnu,\boldsymbol{0}}g(t)) L_\bsnu(\bssigma^{(i)}), \delta^Y_\bsm(t) L_\bsm(\bssigma^{(i)})\rangle_H 
\end{align*}
for~$1\le i \le N$. In the remainder of this section, we shall omit the dependence on~$t$ for better readability whenever it is clear from the context. The (unbiased) weighted empirical covariance for fixed~$t \in [0,T]$ admits a matrix representation, that is
\begin{align*}    \langle \mathfrak{Cov}(\omega) X,Y\rangle_{\bbR^N} =  \frac{1}{N-1}\sum_{i=1}^N \bigg( \omega_i \bigg(X_i - \frac{1}{N}\sum_{j=1}^N \omega_j X_j\bigg) \bigg(Y_i - \frac{1}{N}\sum_{j=1}^N \omega_j Y_j\bigg)\bigg).
\end{align*}
It is readily verified that this matrix representation is given by
\begin{align*}
    \mathfrak{Cov}(\omega) := \frac{1}{N(N-1)}\begin{bmatrix}
        c_{ij}
    \end{bmatrix} \quad \text{with} \quad c_{ij} := \begin{cases}
        N\omega_i - \omega_i^2 & \text{if } i=j,\\
        -\omega_i \omega_j &\text{otherwise}.
    \end{cases}
\end{align*}

Using this matrix representation, we will next derive a matrix representation of the operator associated with the covariance term in~$\mathcal{Q}_{C,\omega}(\bar{y}(\cdot;t);t)$.

\paragraph{Approximation of the operator~$\mathcal{Q}_{C,\omega}(\bar{y}(\cdot;t);t)$}

For the numerical implementation, a spatial discretization, for instance by a finite element method (FEM), is required. For functions~$v \in H$ we suppose a representation of the form~$v = \sum_{i = 1}^{d_{\rm FEM}} v_i \phi_i$ such that~$\langle v, \tilde{v} \rangle_H = \bsv^\top M \tilde{\bsv}$, for~$v\in H$ and $\tilde{v} \in H$ with~$M_{ij} = \langle \phi_i, \phi_j\rangle_H$, i.e.,~$M \in \mathbb{R}^{d_{\rm FEM}\times d_{\rm FEM}}$. 

Without loss of generality we assume that~$\bsnu$ and~$\bsm$ both belong to~$\Lambda$, and that~$b_\bsm^X$ and~$b_\bsm^Y$ depend on the same multi-index~$\bsm$. Then, defining the FEM coefficient vectors as $\boldsymbol{a}_{\bsnu_k} := \begin{bmatrix}C^\ast C  \bar{y}_{\bsnu_k}(t) - \delta_{\bsnu_k,\boldsymbol{0}}g(t)\end{bmatrix} \in \mathbb{R}^{d_{\rm FEM}}$ and~$b^X_{\bsm_k}:= \begin{bmatrix} \delta^X_{\bsm_k}(t) \end{bmatrix} \in \mathbb{R}^{d_{\rm FEM}}$ for~$1\le k \le K+1$ allows to rewrite 
\begin{align*}
    X_i = \begin{bmatrix} a_{\bsnu_1}^\top & \ldots & a_{\bsnu_{K+1}}^\top \end{bmatrix} \boldsymbol{L}_\bsnu(\bssigma^{(i)}) \boldsymbol{M} \boldsymbol{L}_\bsm(\bssigma^{(i)})\begin{bmatrix} b^X_{\bsm_1} \\ \vdots \\ b^X_{\bsm_{K+1}}\end{bmatrix},
\end{align*}
where~$\boldsymbol{M} = \operatorname{blockdiag}(M) \in  \mathbb{R}^{d_{\rm FEM} \cdot (K+1) \times d_{\rm FEM} \cdot (K+1)}$ and
\begin{align*}
    \boldsymbol{L}_\bsnu(\bssigma^{(i)}) = \begin{bmatrix} [L_{\bsnu_1}(\bssigma^{(i)})] & & \\ & \ddots & \\ & & [L_{\bsnu_{K+1}}(\bssigma^{(i)})] \end{bmatrix} \in \mathbb{R}^{d_{\rm FEM} \cdot (K+1) \times d_{\rm FEM} \cdot (K+1)},
\end{align*}
with
\begin{align*}[L_{\bsnu_k}(\bssigma^{(i)})] \in \mathbb{R}^{d_{\rm FEM} \times d_{\rm FEM}} \quad \text{with entries}\quad [L_{\bsnu_k}(\bssigma^{(i)})]_{j\ell} = \begin{cases} L_{\bsnu_k}(\bssigma^{(i)}) & \text{if }j = \ell,\\ 0 & \text{otherwise}.\end{cases}\end{align*}
Analogously, with~$b^Y_{\bsm_k}:= \begin{bmatrix} \delta^Y_{\bsm_k}(t) \end{bmatrix} \in \mathbb{R}^{d_{\rm FEM}}$ for~$1\le k \le K+1$ we get
\begin{align*}
    Y_i = \begin{bmatrix} a_{\bsnu_1}^\top & \ldots & a_{\bsnu_{K+1}}^\top \end{bmatrix} \boldsymbol{L}_\bsnu(\bssigma^{(i)}) \boldsymbol{M}  \boldsymbol{L}_\bsm(\bssigma^{(i)})\begin{bmatrix} b^Y_{\bsm_1} \\ \vdots \\ b^Y_{\bsm_{K+1}}\end{bmatrix}.
\end{align*}
For the set of quadrature points~$\{\bssigma^{(1)},\ldots,\bssigma^{(N)}\}$ we define
\begin{align*}
    \mathfrak{M} = \begin{bmatrix}\begin{bmatrix}a_{\bsnu_1}^\top & \ldots & a_{\bsnu_{K+1}}^\top \end{bmatrix} \boldsymbol{L}_\bsnu(\bssigma^{(1)}) \boldsymbol{M}  \boldsymbol{L}_\bsnu(\bssigma^{(1)}) \\
    \vdots\\\begin{bmatrix}a_{\bsnu_1}^\top & \ldots & a_{\bsnu_{K+1}}^\top \end{bmatrix} \boldsymbol{L}_\bsm(\bssigma^{(N)}) \boldsymbol{M}  \boldsymbol{L}_\bsm(\bssigma^{(N)})\end{bmatrix} \in \mathbb{R}^{N \times d_{\rm FEM}\cdot (K+1)}.
\end{align*}
such that we can write
\begin{align*}  
X  = \begin{bmatrix}
        X_1 \\ \vdots \\ X_N
        \end{bmatrix} = \mathfrak{M} \begin{bmatrix} b^X_{\bsm_1} \\ \vdots \\ b^X_{\bsm_{K+1}}\end{bmatrix} \in \mathbb{R}^N\quad \text{and} \quad Y  = \begin{bmatrix}
        Y_1 \\ \vdots \\ Y_N
        \end{bmatrix} = \mathfrak{M} \begin{bmatrix} b^Y_{\bsm_1} \\ \vdots \\ b^Y_{\bsm_{K+1}}\end{bmatrix} \in \mathbb{R}^N.
\end{align*}
This allows the following representation of the discretized covariance operator
\begin{align*}
    \langle \mathfrak{Cov}(\omega) X,Y\rangle_{\bbR^N} &= \begin{bmatrix} X_1 \\ \vdots \\ X_N \end{bmatrix}^\top \mathfrak{Cov}(\omega) \begin{bmatrix} Y_1 \\ \vdots \\ Y_N \end{bmatrix} = 
    \begin{bmatrix} b^X_{\bsm_1} \\ \vdots \\ b^X_{\bsm_{K+1}}\end{bmatrix}^\top \mathfrak{M}^\top \mathfrak{Cov}(\omega) \mathfrak{M}\begin{bmatrix} b^X_{\bsm_1} \\ \vdots \\ b^X_{\bsm_{K+1}}\end{bmatrix}.
\end{align*}
where~$\mathfrak{M}^\top \mathfrak{Cov}(\omega) \mathfrak{M}$ is of dimension~$d_{\rm FEM}\cdot (K+1) \times d_{\rm FEM}\cdot (K+1)$.

Furthermore, for the constant term in~$\mathcal{Q}_{C,\omega}(\bar{y}(\cdot;t);t)$ we have 
\begin{align*}
\langle C^\ast C \delta^X, \delta^Y \rangle_{L^2(\mathfrak{S};H)} &= \sum_{\bsm, \bsnu \in \Lambda} \langle C^\ast C \delta^X_\bsnu, \delta^Y_\bsm \rangle_H \langle \omega_{\theta,C}(\bar{y}(\bssigma;t)) L_\bsnu(\bssigma), L_\bsm(\bssigma)\rangle_{L^2(\mathfrak{S};\mathrm d\bssigma)}\\
&= \begin{bmatrix} b^X_{\bsnu_1} \\ \vdots \\ b^X_{\bsnu_{K+1}}\end{bmatrix}^\top \begin{bmatrix} \frac{1}{N} \sum_{i=1}^N \omega_i L_{\bsnu}(\bssigma^{(i)}) \boldsymbol{C^\ast M C} L_{\bsm}(\bssigma^{(i)}) \end{bmatrix} \begin{bmatrix} b^Y_{\bsm_1} \\ \vdots \\ b^Y_{\bsm_{K+1}}\end{bmatrix},
\end{align*}
where~$\boldsymbol{C^\ast M C} = \operatorname{blockdiag}(C^\ast M C) \in \mathbb{R}^{d_{\rm FEM} \cdot (K+1) \times d_{\rm FEM} \cdot (K+1)}$. 

Overall, for~$\bsnu, \bsm \in \Lambda$,~$t \in [0,T]$ and~$\{\bssigma^{(i)}\}_{i = 1}^N$ we have
$$\mathcal{Q}_{C,\omega}(\bar{y}(\cdot;t);t) \approx \frac{1}{N} \sum_{i=1}^N \Big(\omega_i(t) L_{\bsnu}(\bssigma^{(i)}) \boldsymbol{C^\ast M C} L_{\bsm}(\bssigma^{(i)})\Big) + 2\theta\, \mathfrak{M}^\top(t) \mathfrak{Cov}(\omega;t) \mathfrak{M}(t).$$
The operator~$\mathcal{Q}_{P,\omega}$ can be computed analogously.

\subsection{Remarks and outlook on the error analysis}
In addition to the spatial and temporal discretization of a deterministic optimal control problem subject to PDEs, the involved uncertainty requires the following approximations: 
\begin{enumerate}
    \item Dimension truncation of the parameter space, leading to
    \begin{align}\label{eq:dimtrunc}
        \int_{\mathfrak{S}} X(\bssigma) \,\mathrm d\bssigma \approx \int_{\mathfrak{S}_s} X(\bssigma_s) \,\mathrm d\bssigma_s,
    \end{align}
    for some integrable quantity of interest~$X$.
    \item Numerical integration of the~$s$-dimensional integrals
    \begin{align}\label{eq:QMC}
        \int_{\mathfrak{S}_s} X(\bssigma_s) \,\mathrm d\bssigma_s \approx \sum_{i = 1}^N w_i X(\bssigma^{(i)}),
    \end{align}
    for some integrable quantity of interest~$X$.
    \item Truncation of the polynomial chaos expansion
    \begin{align}\label{eq:GPC}
        \sum_{\bsnu \in \mathscr{F}} \delta_\bsnu L_\bsnu(\bssigma) \approx \sum_{\bsnu \in \Lambda} \delta_\bsnu L_\bsnu(\bssigma),
    \end{align}
    for a finite set~$\Lambda \subset \mathscr{F}$ and some~$\delta \in L^2(\mathfrak{S}; \mathrm d\bssigma; H)$
\end{enumerate}
For sufficiently smooth problems these errors can be controlled. More precisely, assuming that~$\|\partial^\bsnu_\bssigma A(\bssigma)\|_{\mathcal{L}(V,V')} \le \bsb^{\bsnu}$ for a monotonically decreasing sequence~$\bsb = (b_j)_{j \in \bbN}$, which is~$p$-summable for some~$p \in (0,1)$, it can be shown that the solution of~\eqref{eq:sys} depends analytically on the parameter sequence~$\bssigma \in \mathfrak{S}$ with~$\|\partial^{\bsnu}_{\bssigma} y(\bssigma)\|_V \le C |\bsnu|! \bsb^{\bsnu}$.
Here we use the following notation: for a sequence~$\bssigma := (\sigma_j)_{j\in \bbN}$ of real numbers and~$\bsnu \in \mathscr{F}$, we define
\begin{align*}
        \partial^\bsnu_{\bssigma} := \frac{\partial^{\nu_1}}{\partial\sigma_1} \frac{\partial^{\nu_2}}{\partial\sigma_2} \cdots, \qquad \text{and} \qquad \bssigma^\bsnu := \prod_{j=1}^{\infty} \sigma_j^{\nu_j},
\end{align*}
where we follow the convention~$0^0 := 1$. 

In \cite{kunoth2013analytic} it is shown that the optimal control and optimal state-adjoint-state pair of a linear quadratic optimal control problem admit analytic regularity with respect to the uncertain parameters, provided that the system operator~$A(\bssigma)$ admits the regularity as described above. Based on this result, in~\cite{GKK24} it is shown that a Riccati based feedback law for an autonomous system ($A(\cdot),B,C,P$) admits a similar regularity bound. Such bounds are frequently obtained in the context of Karhunen--Lo\`eve expansions of random fields (\cite{becktemponenobiletamellini, Cohen2010, ChenQuarteroni, Graham2015}), and can be used to derive convergence rates for the approximations~\eqref{eq:dimtrunc}, \eqref{eq:QMC}, and~\eqref{eq:GPC}: precisely the integrated dimension truncation error~\eqref{eq:dimtrunc} decays as~$\mathcal{O}(s^{-2/p+1})$ (\cite{GuthKaa23}), while the~$L^2$-dimension truncation error decays as~$\mathcal{O}(s^{-1/p+1/2})$ (\cite{GuthKaa23_2}). Randomly shifted rank-1 lattice rules achieve a root mean square error of~$\mathcal{O}(N^{-1+\varepsilon})$ (\cite[Thm.~6.6]{GKKSS24}), while interlaced polynomial lattice rules achieve an integration error convergence~$\mathcal{O}(N^{-\alpha})$,~$\alpha\ge 1$ (\cite[Sect.~5]{GKK24}). Moreover, there exists a sequence of index sets~$(\Lambda_n)_{n \in \mathbb{N}}$ whose cardinality does not exceed~$n$ such that the best $n$-term approximation rate (in~$L^2$) of the Legendre polynomials is~$\mathcal{O}(n^{1/p+1/2})$, see~\cite{Cohen_DeVore_2015}, or~\cite[Thm.~7]{kunoth2013analytic}.

The extension of these results to the presented setting with a time-dependent term appearing in the Riccati equation as well as a parameter-dependent target of the quadratic subproblems, which are used to approximate the risk-averse optimal control problem, exceeds the scope of this paper and is postponed to future works.

\section{Numerical experiments}
Let us consider the parameterized diffusion-reaction equation under Neumann boundary conditions as follows
\begin{align*}
\dot{y}(\bssigma;t,x) - 0.5\Delta y(\bssigma;t,x) + c(\bssigma;x) y(\bssigma;t,x)  &= \sqrt{10}\sum_{i = 1}^{N_{\mathrm a}} u_i(t,x) \mathbf{1}_{O_i}(x) &&(t,x) \in (0,T] \times D,\\
\frac{\partial y(\bssigma;t,x)}{\partial \mathrm n} &= 0 &&(t,x) \in [0,T] \times \partial D,\\
y(\bssigma;t,x) &= y_\circ &&(t,x) \in \{t=0\} \times D,
\end{align*}
where~$T=0.5$,~$D = (0,1)$ with boundary~$\partial D = \{0,1\}$, and the functions~$\mathbf{1}_{O_i}$ represent the support of the actuators, which are modelled as the characteristic functions associated to open sets~$O_i \subset D$ for~$1\le i \le N_a$. In the following numerical experiments~$N_{\mathrm a} = 3$ actuators are used as~$O_1 = [0.1, 0.3]$, $O_2 = [0.4, 0.6]$, and $O_3 = [0.7, 0.9]$. While the diffusion is constant, the reaction term is modeled as 
\begin{align*}
    c(\bssigma;x) = \bar{c}(x) + \sum_{j=1}^{s} \sigma_j \psi_j(x),
\end{align*}
where the mean field is set to~$\bar{c}(x) = 0.2$ and~$\sigma_j$ are independent and identically distributed (i.i.d.)~uniformly in~$[-1,1]$ for all~$j=1,\ldots,s=2$. Furthermore, the parametric basis functions are chosen to be~$\psi_{2j}(x) = (2j)^{-\vartheta} \sin(j\pi x)$ and~$\psi_{2j-1}(x) = (2j-1)^{-\vartheta} \cos(j\pi x)$ with decay~$\vartheta = 2$. We use tensorized Legendre polynomials up to degree~$p = 2$ for the stochastic Galerkin method,~$(\Delta t)^{-1} = 200$ time steps for the temporal discretization and piecewise linear finite elements with meshwidth~$h = 2^{-5}$ for the spatial discretization. The weights~$\eqref{eq:weights}$ are approximated using~$100$ i.i.d.~samples of the trajectories in a Monte Carlo approximation for the expected values.

We set~$P=0$ and $Q = \mathbf{1}_H$, where~$\mathbf{1}_H$ denotes the identity operator in~$H$, as well as the initial condition~$y_\circ(x) = 4-\cos(2\pi x)$. The initial expansion point~$\bar{y}^{(-1)}$ for Algorithm~\ref{alg:SQP} solves the uncontrolled equation with shifted initial condition~$\bar{y}_\circ(t=0,x) = 1-\cos(2\pi x)$. The target~$g$ solves the state equation with~$c(\bssigma;x) = 0$ for all~$\bssigma \in \mathfrak{S}$ and all~$x \in D$, and with initial data~$g(t=0,x) = 1.25-\cos(2\pi x)$.

\paragraph{Comparison to the risk-neutral case}
In Figure~\ref{fig1}, we present the empirical distributions of the optimal tracking errors at six selected time points, based on~$10^4$ random draws of the parameter~$\bssigma \in \mathfrak{S}$. We compare the risk-averse ($\theta = 10$) and risk-neutral ($\theta = 0$) cases. As expected, the distribution of the optimal risk-averse tracking errors are shifted toward smaller errors compared to the optimal risk-neutral case. This behavior is observed uniformly across all selected time points.

\begin{figure}[ht]
\centering
{\includegraphics[trim={0 0 0 0},clip,width=1.\linewidth]{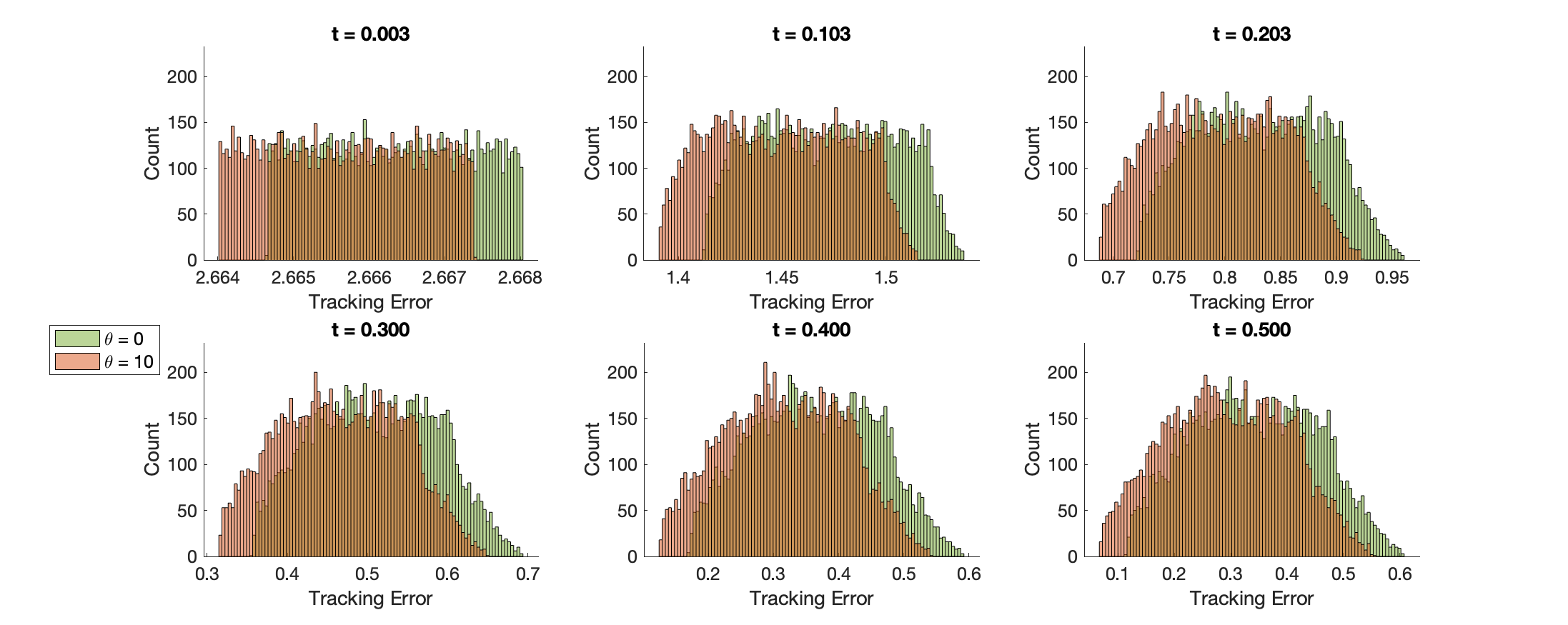}}
\caption{Histogram of the tracking errors at six selected time points based on~$10^4$ realizations of the optimal trajectory. Risk-averse case (green) and risk-neutral-case (orange).}
\label{fig1}
\end{figure}

\paragraph{Comparison to reference trajectories} As before we compute~$10^4$ realizations of the tracking errors of the uncontrolled state. At~$t = 0$ the tracking errors coincide for all realizations since the initial condition is deterministic. Furthermore, we observe that the tracking errors increase over time for all realizations, see Figure~\ref{fig:2}.
Figure \ref{fig:3} shows~$10^4$ realizations of the tracking error corresponding to the optimal state, obtained with the proposed SQP method (Algorithm \ref{alg:SQP}) alongside~$10^4$ realizations of the tracking error corresponding to an open-loop reference solution computed by a gradient descent algorithm. The displayed results corresponds to the final iterate after~$20$ SQP iterations and~$200$ gradient descent iterations, respectively. The SQP tracking errors are indistinguishable from the open-loop reference, indicating that the two methods converge approximately to the same solution, thus providing a validation for the numerical realizations.

\begin{figure}[ht]
     \centering
     \begin{subfigure}{0.49\textwidth}
         \centering
         {\includegraphics[trim={0 0 0 0},clip,width=1.\linewidth]{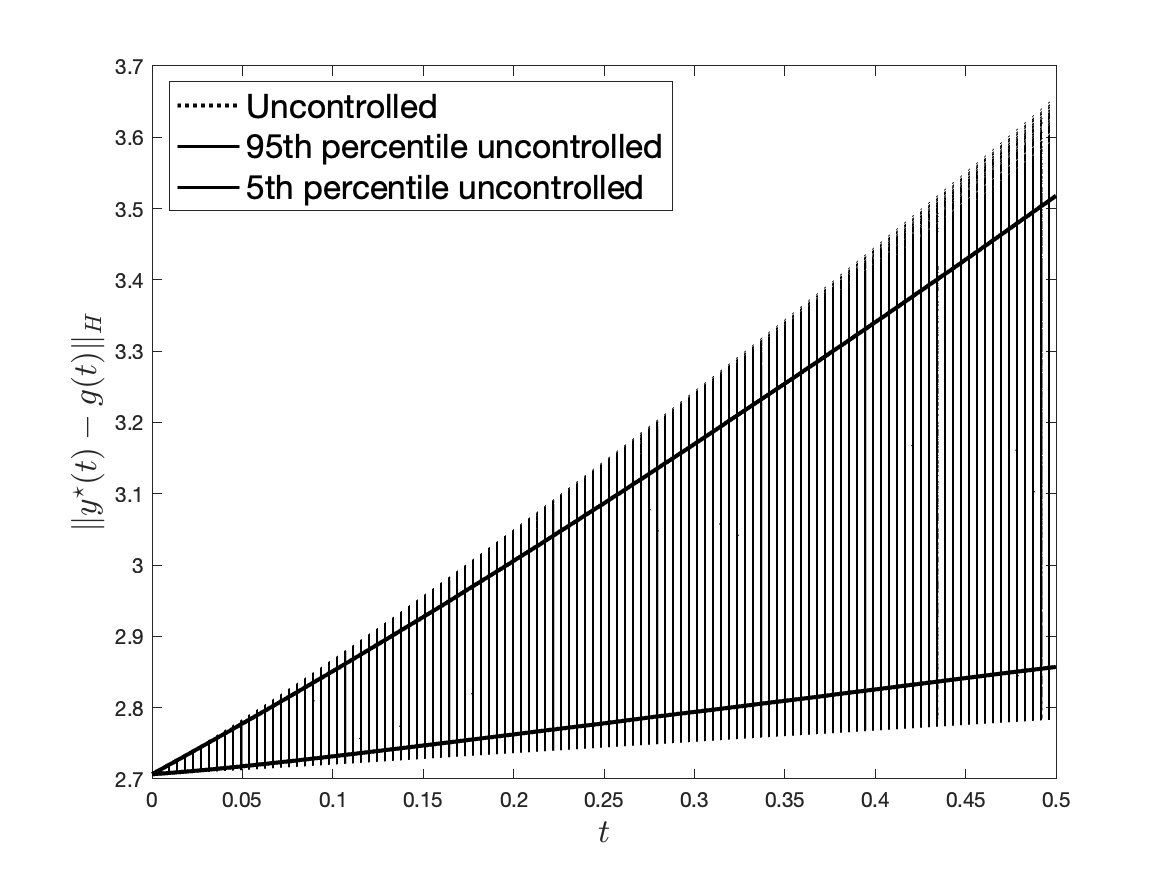}}
         \caption{$10^4$ realizations of the tracking error as a function of time for the uncontrolled state trajectories alongside the 95th and 5th percentiles.}
         \label{fig:2}
     \end{subfigure}
     \hfill
     \begin{subfigure}{0.49\textwidth}
         \centering
         {\includegraphics[trim={0 0 0 0},clip,width=1.\linewidth]{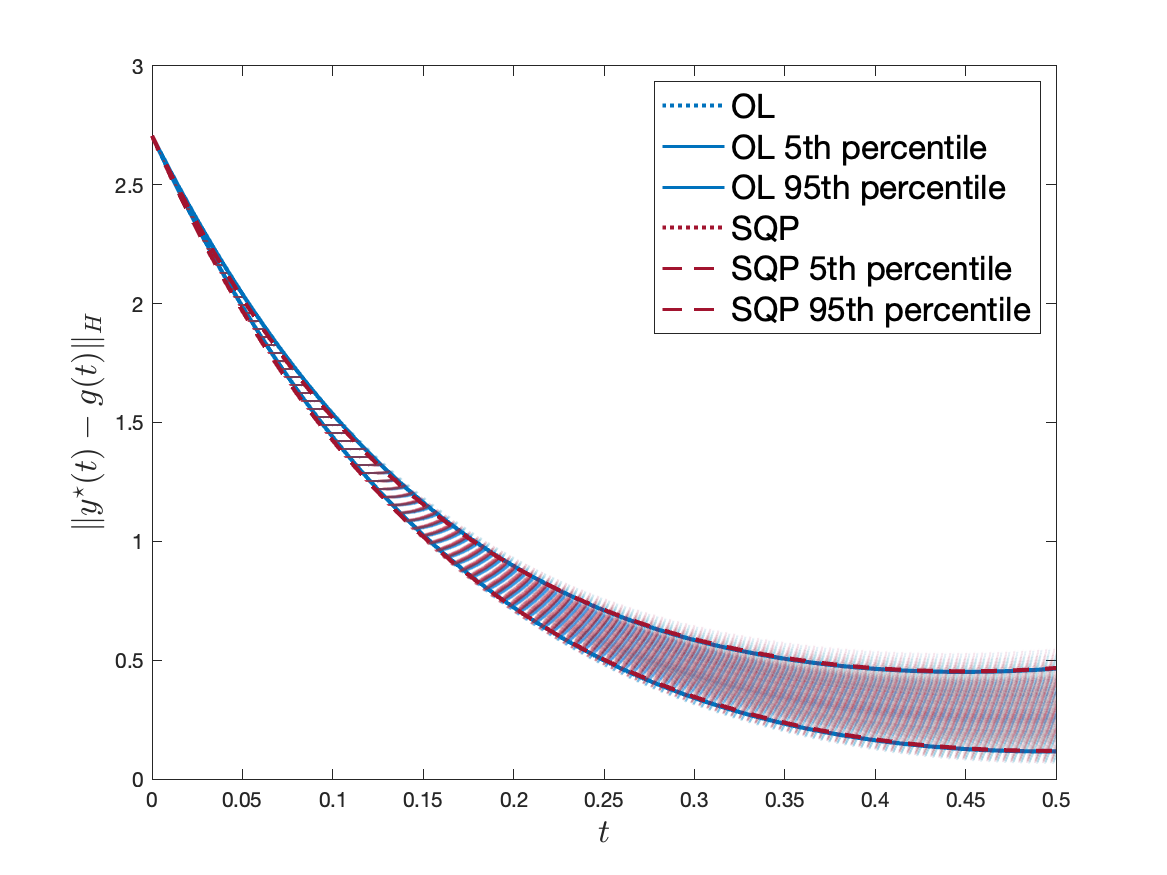}}
         \caption{$10^4$ realizations of the tracking error as a function of time for the optimal trajectories alongside the 95th and 5th percentiles (SQP and OL, respectively).}
         \label{fig:3}
     \end{subfigure}
\end{figure}

\paragraph{Robustness against perturbed initial conditions}
Closed-loop solutions are expected to behave in a more robust manner against noise than open-loop controls. In this example we illustrate that this is also the case in the present setting. To demonstrate this behavior, the performance of the approximate feedback control obtained by the proposed SQP method is compared to an open-loop solution for varying initial conditions. To this end, we parameterize the initial condition by a noise level~$\ell$ as
\begin{align*}
y_{\ell}^+ &:= y_0 + \ell + 0.01\ell \xi,\quad 0\le \ell \le 2,\\
y_{\ell}^- &:= y_0 - \ell - 0.01\ell \xi,\quad 0\le \ell \le 2.
\end{align*}
where~$\xi$ is a vector of the same length as~$y_0$ containing i.i.d.~standard normally distributed random variables. The approximate feedback control as well as the open-loop solution are computed without noise, i.e., for~$\ell = 0$ such that~$y_0 = y_\ell^+ = y_\ell^-$, and then tested in the dynamics with perturbed initial conditions. The results for~$y_\ell^+$ and~$y_\ell^-$ are displayed in Figure~\ref{fig4}. It is observed that the approximate closed-loop control (CL) steers the state to approximately the same neighborhood of the target, i.e., the tracking errors have a similar magnitude across different noise levels. The open-loop reference solution on the other hand clearly leads to larger tracking errors for increasing noise levels. Furthermore, with noise level~$\ell = 0$ the CL and OL solutions are indistinguishable as expected. 

\begin{figure}[h!]
\centering
{\includegraphics[width=1.\linewidth]{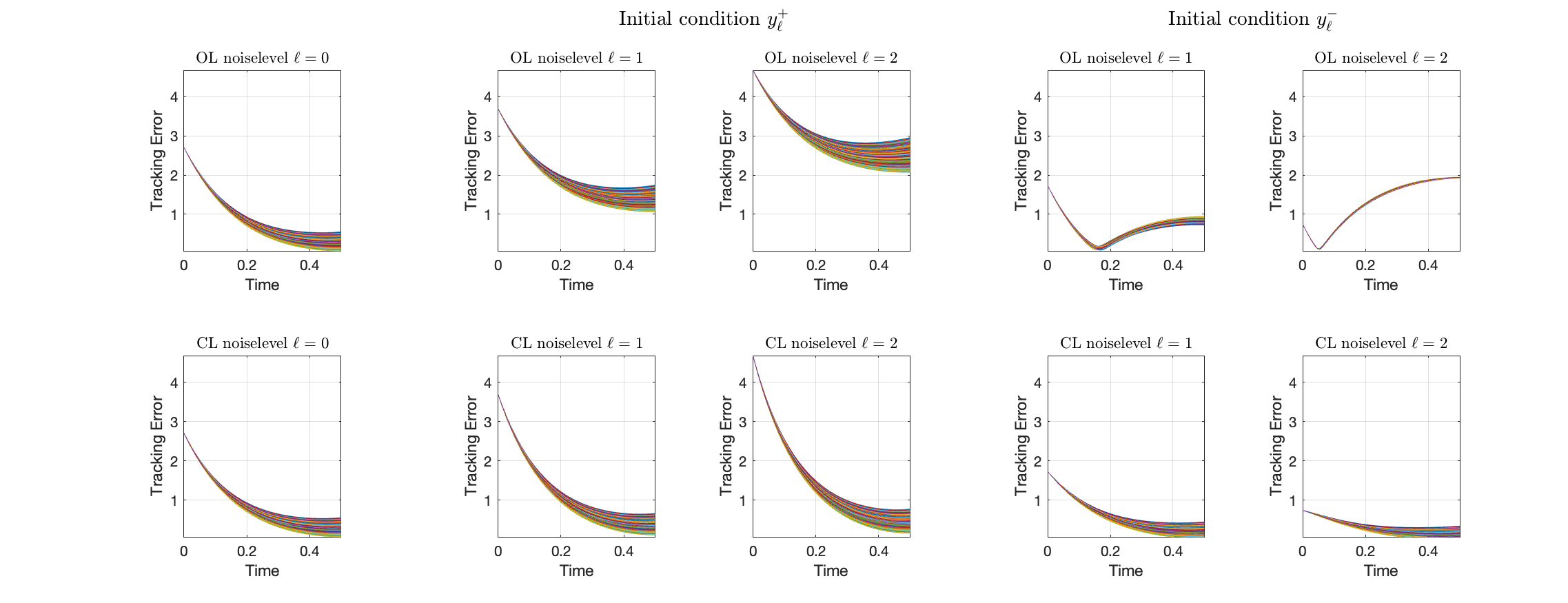}}
\caption{$10^4$ realizations of the tracking error for the open-loop solution (top line) and the approximate closed-loop solution (bottom line) for different initial conditions. The plots in the left column correspond to the unperturbed initial condition~$y_0$, i.e., to noise level~$\ell = 0$. The plots in the center correspond to the perturbed intial condition~$y_\ell^+$, $0\le \ell \le 2$, and the plots in the right-hand side block correspond to the perturbed intial condition~$y_\ell^-$, $0\le \ell \le 2$.}
\label{fig4}
\end{figure}

\section{Conclusions and outlook}

We investigate a class of risk-averse optimal control problems for linear dynamical systems with uncertain evolution operators and develop an SQP algorithm to approximate the optimal feedback control. Such controls in feedback form can be constructed independently of the initial condition of the dynamics, which makes them particularly well-suited for time-critical applications involving uncertainty.
Our approximate feedback is computed \emph{a priori} and is therefore independent of any specific realization of the uncertain dynamics. We have shown that our approximation coincides with the unique optimal control along the optimal trajectory. Moreover, the numerical experiments suggest that our method yields a control that is significantly more robust to variations in initial conditions than the optimal open-loop control.

Future research directions include the investigation of computational strategies for large-scale problems with high-dimensional uncertainty, the extension to optimal control problems over an infinite time horizon, and a sequential update rule of the feedback driven by observational data.

\bibliographystyle{plain}
\bibliography{references}

\end{document}